\newcommand{\Z}{\mathbb{Z}}
\newcommand{\Q}{\mathbb{Q}}
\newcommand{\A}{\mathbb{A}}
\newcommand{\mcA}{\mathcal{A}}
\newcommand{\Oo}{\mathcal{O}}
\newcommand{\End}{\operatorname{End}}
\newcommand{\Image}{\operatorname{Im}}
\newcommand{\Id}{\operatorname{Id}}
\newcommand{\Spec}{\operatorname{Spec}}
\newcommand{\Hom}{\operatorname{Hom}}
\newcommand{\Sh}{\operatorname{Sh}}
\newcommand{\GL}{\operatorname{GL}}
\newcommand{\GSp}{\operatorname{GSp}}
\renewcommand{\tilde}{\widetilde}
\renewcommand{\mod}{\operatorname{mod}}
\title{On the Hodge embedding for PEL type integral models of Shimura varieties}
\author{Yujie Xu}
\address{Department of Mathematics, Harvard University}
\curraddr{}
\email{yujiex@math.harvard.edu}
\date{}
\numberwithin{equation}{section}
\newtheorem{prop}[equation]{Proposition}
\newtheorem{lem}[equation]{Lemma}
\newtheorem{Coro}[equation]{Corollary}
\theoremstyle{definition}
\newtheorem{Defn}[equation]{Definition}
\newtheorem{remark}[equation]{Remark}
\begin{document}

\maketitle

\begin{abstract}
We give a simple proof 
that Kottwitz's PEL type integral models of Shimura varieties admit closed embeddings into Siegel integral models. We also show that Rapoport's and Kottwitz's integral models agree with Kisin's integral models for relevant Shimura data. 
\end{abstract}

\tableofcontents

\section{Introduction}
Let $\Sh_K(G,X)$ be a Shimura variety of PEL type (in particular, one can take it to be of Hilbert modular type) and $\mathcal{S}_K(G,X)$ its integral model \cite{Kottwitz} (resp.~\cite{Rapoport}). The natural morphism $\mathcal{S}_K(G,X)\to\mathscr{S}_{K'}(\GSp,S^{\pm})$, where the target is some suitable Siegel integral model, is often called the \textit{Hodge morphism}. 

We give a simple, down-to-earth proof that the Hodge morphism is a closed embedding for PEL type integral models. Along the way, we also prove equivalence of the Kottwitz and Rapoport models with Kisin's integral models, constructed by taking the flat closure of the generic fibre inside the Siegel integral models.\footnote{The same proofs go through for parahoric integral models, but our expositions shall focus on the hyperspecial cases.}

In terms of moduli descriptions, the Hodge morphism is given by forgetting the $\Oo_B$-action on an abelian scheme $\mcA$, where $B$ is a semisimple $\Q$-algebra attached to the PEL moduli problem. Let $T^{(p)}(\mcA)$ be the prime-to-$p$ Tate module. For a point on $\mathcal{S}_K(G,X)$, the corresponding level structure $\eta:V\otimes \A_f^p\xrightarrow{\sim}T^{(p)}(\mcA)\otimes\A_f^p$ is compatible with the $\Oo_B$-actions. This shows that the $\Oo_B$-action on $\mcA$ is already determined by $\eta$ (see Proposition \ref{injectivityprop-PEL}), and hence that the Hodge morphism is an embedding (Proposition \ref{infinite-level-embedding}). Strictly speaking, this argument only applies when we take the infinite 
level structure away from $p$, but it is not hard to descent down to finite levels (see \ref{descent-down-section}). 

\section{Preliminaries}
First we recall the basic theory. 
Let $B$ be a  
semi-simple $\Q$-algebra, endowed with a positive involution $*$. Let $V$ be a finite dimensional $B$-module $V$, endowed with a non-degenerate bilinear alternating pairing $\langle\cdot,\cdot\rangle$. 
Let $G$ be the reductive group over $\Q$ defined by
\[G(R)=\{g\in \GL(V\otimes R),\exists\mu\in R^*,\forall x,y\in V\otimes R, \langle gx,gy\rangle=\mu\langle x,y\rangle\}\]
Let $E=E(G,X)$ be the reflex field of the Shimura datum $(G,X)$. Let $\Oo_{E,(v)}$ be the localization of $\Oo_E$ at a prime $v$ of $E$ above $p$.

Let $\Oo_B$ be a $\Z_{(p)}$-order 
in $B$ that is stable under the involution $*$ of $B$ and becomes maximal after tensoring with $\Z_p$.
We also require that:
\begin{itemize}
    \item $B$ is unramified at $p$, i.e. $B_{\Q_p}=B\otimes_{\Q}\Q_p$ is isomorphic to a product of matrix algebras over unramified extension of $\Q_p$.
    \item There exists a $\Z_p$-lattice $\Lambda$ in $V_{\Q_p}$ that is stable under $\Oo_B$, and such that the pairing $\langle\cdot,\cdot\rangle$ induces a perfect duality of $\Lambda$ with itself.
\end{itemize}
We fix such a $\Lambda$ (our level structure will be defined in terms of this lattice, however, the construction of the integral model does not depend on the choice of $\Lambda$).

Let $S$ be a scheme over $\Spec\Oo_{E,(v)}$.
We define an $R$-isogeny between abelian schemes $\mcA$ and $\mcA'$ to be an isomorphism in the localized category where the objects are abelian schemes and the set of morphisms from $\mcA$ to $\mcA'$ is $\Hom(\mcA,\mcA')\otimes_{\Z} R$. In particular, an $R$-polarization of $\mcA$ is a polarization of $\mcA$ that is also an $R$-isogeny from $\mcA$ to the dual abelian scheme $\mcA^t$. 

\begin{Defn}
We define a $\Z_{(p)}$-polarized abelian scheme (over $S$) with an action of $\Oo_B$ to be a tuple $(\mcA,\lambda,\iota)$ where:
\begin{itemize}
    \item $\mcA$ is an abelian scheme over $S$.
    \item $\lambda$ is a $\Z_{(p)}$-polarization.
    \item $\iota$ is an injective ring homomorphism $\Oo_B\to\End_S(\mcA)\otimes_{\Z}\Z_{(p)}$ which respects involution on both sides (the involution $*$ on $\Oo_B$ inherited from that on $B$, and the Rosatti involution $\dagger$ coming from the polarization $\lambda$ on $\End_S(\mcA)$.)
\end{itemize}
\end{Defn}

Take any geometric point $s$ of $S$. 
Consider the usual Tate module 
$T(\mcA_s)=\underset{N}{\varprojlim}\mcA_s[N]$, the \textit{prime-to-$p$ Tate module} 
$T^{(p)}(\mcA_s)=\underset{p\nmid N}{\varprojlim}\mcA_s[N]=T(\mcA_s)\otimes_{\hat{\Z}}\hat{\Z}^{(p)}$ 
and the \textit{rational Tate module}
$V^{(p)}(\mcA_s)=H_1(\mcA_s,\A_f^p)=T^{(p)}(\mcA_s)\otimes_{\hat{\Z}^{(p)}}\A_f^p$. 
We choose a geometric point $s$ of a connected component of $S$.
\begin{Defn}\label{PEL-infin-level}
The infinite level strucure (away from $p$) 
is a morphism
\[\eta_{\infty}^p: \Lambda\otimes\A_f^p\xrightarrow{\sim} V^{(p)}(\mcA_s)\] which respects the bilinear forms up to a scalar in $(\A_f^p)^*$ and is compatible with the $\Oo_B$-action on both sides.
\end{Defn}

\begin{Defn}  
Let $K^p\subset G(\A_f^p)$ be a compact open subgroup. A $K^p$-level structure on $(\mcA,\lambda,\iota)$ is a $K^p$-orbit $\overline{\eta}^p$ of the morphism $\eta_{\infty}^p$ given in Defintion \ref{PEL-infin-level} such that the orbit is fixed under the action of $\pi_1(s,S)$.
\end{Defn}
Note that a level structure is independent of the choice of $s$.

\begin{Defn}\cite{Kottwitz}  
We define the moduli functor $\mathfrak{F}_{K^p}$ from the category of 
$\Spec\Oo_{E,(v)}$-schemes to the category of sets:
\[S\mapsto \mathfrak{F}_{K^p}(S):=\{(\mcA/S, \lambda,\iota,\overline{\eta}^p)\}/\sim\]
\begin{itemize}
    \item $(\mcA/S,\lambda,\iota)$ is a $\Z_{(p)}$-polarized abelian scheme\footnote{I assume all ``abelian schemes'' are automatically ``projective'' by definition, so I'm dropping ``projective'' from the statement} over $S$, with an action of $\Oo_B$ which respects the determinant condition \cite[$\mathsection$5]{Kottwitz} 
    \item $\overline{\eta}^p$ is a $K^p$-level structure over each connected component of $S$
\end{itemize}
\end{Defn} 
\begin{prop}\cite[$\mathsection 5$]{Kottwitz} 
For sufficiently small $K^p$, the moduli functor $\mathfrak{F}_{K^p}$ is representable by a quasi-projective scheme $\mathcal{S}_{K^p}$ over $\Oo_E\otimes_{\Z}\Z_{(p)}$.
\end{prop}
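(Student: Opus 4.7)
The plan is to reduce representability of $\mathfrak{F}_{K^p}$ to the analogous (and older) representability statement for the Siegel moduli problem, and then cut out the PEL data by a sequence of (locally closed or étale) subfunctor conditions. Concretely, the Hodge embedding of groups $G \hookrightarrow \GSp(V,\langle\cdot,\cdot\rangle)$ produces a natural forgetful transformation $\mathfrak{F}_{K^p} \to \mathfrak{F}^{\Sp}_{K'}$, where $K' \subset \GSp(\A_f^p)$ is a sufficiently small compact open containing the image of $K^p$ and $\mathfrak{F}^{\Sp}_{K'}$ is the usual Siegel moduli functor of $(\mcA,\lambda,\overline{\eta}')$. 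By Mumford's GIT construction, $\mathfrak{F}^{\Sp}_{K'}$ is represented by a quasi-projective scheme $\mathscr{S}^{\Sp}_{K'}$ over $\Z_{(p)}$ once $K'$ is small enough to kill automorphisms.

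Next, I would show that the above forgetful transformation is relatively representable by a locally closed subscheme. The key point is that for an abelian scheme $\mcA \to S$, the relative endomorphism functor $\underline{\End}(\mcA)$ is representable by a disjoint union of schemes unramified over $S$; hence specifying a ring homomorphism $\iota : \Oo_B \to \End_S(\mcA) \otimes \Z_{(p)}$ compatible with $*$ and the Rosati involution is a closed condition on $\mathscr{S}^{\Sp}_{K'}$ (working étale-locally where $\End(\mcA)$ trivializes, then descending). The determinant condition of \cite{Kottwitz} imposes that the characteristic polynomial of $\iota(b)$ acting on $\Lie(\mcA/S)$ equal a prescribed polynomial; this is a locally closed condition on the $\Oo_B$-stable subfunctor. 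Finally, upgrading a $K'$-level structure to a $K^p$-level structure compatible with $\iota$ is a finite étale condition once the $\Oo_B$-action is fixed, because it amounts to choosing a $K^p$-orbit inside the $K'$-orbit on which $\pi_1(S,s)$ acts trivially.

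Assembling these steps: first pass to $\mathscr{S}^{\Sp}_{K'}$, then take the closed subscheme where $\Oo_B$ acts compatibly, then the locally closed subscheme where the determinant condition holds, then the finite étale cover where the $K^p$-orbit is chosen and Galois-stable. The resulting scheme $\mathcal{S}_{K^p}$ is finite over a locally closed subscheme of a quasi-projective scheme, hence quasi-projective; base-changing from $\Z_{(p)}$ to $\Oo_{E,(v)}$ is automatic once one tracks the reflex field via the determinant condition.

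The step I expect to be most delicate is showing that the $\Oo_B$-action is genuinely a \emph{closed} (rather than merely constructible or ind-representable) condition, together with the rigidification issue: one must choose $K^p$ (equivalently $K'$) small enough that automorphisms of $(\mcA,\lambda,\iota)$ preserving $\overline{\eta}^p$ are trivial, so that representability by a scheme (not just an algebraic stack) holds. This uses Serre's lemma in the form that a polarized abelian variety with $\Oo_B$-action has finitely many automorphisms, and that sufficiently small level structure kills all of them.
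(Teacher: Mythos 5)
The paper offers no proof of this statement: it is quoted directly from \cite[\S 5]{Kottwitz}, and your sketch reconstructs essentially the argument given there --- represent the Siegel problem by Mumford's GIT, adjoin the $\Oo_B$-action using representability of $\underline{\Hom}$-schemes of abelian schemes, impose the determinant condition as a closed condition, and treat the prime-to-$p$ level structure as a finite \'etale rigidification, with Serre's lemma killing automorphisms for $K^p$ small. One imprecision is worth correcting. Specifying $\iota:\Oo_B\to\End_S(\mcA)\otimes_{\Z}\Z_{(p)}$ is additional \emph{data}, not a \emph{condition}, so the locus of ``abelian schemes admitting a compatible $\Oo_B$-action'' is not a (locally) closed subscheme of the Siegel space, and the forgetful transformation is not a priori a monomorphism: a single polarized abelian scheme could carry several inequivalent $\Oo_B$-structures. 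The correct statement is that the forgetful transformation is relatively representable by a morphism that is unramified and satisfies the valuative criterion of properness (homomorphisms of abelian schemes extend over discrete valuation rings), hence finite; your closing assembly (``finite over a locally closed subscheme of a quasi-projective scheme, hence quasi-projective'') uses exactly this and the quasi-projectivity conclusion stands. But the stronger claim that the $\Oo_B$-action is a closed \emph{condition} --- equivalently, that $\iota$ is determined by the remaining data, so that the Hodge morphism is a closed immersion --- is precisely what this paper sets out to prove (Propositions \ref{injectivityprop-PEL} and \ref{infinite-level-embedding} and Corollary \ref{finite-level-embedding}), and must not be assumed at the representability stage.
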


\begin{Defn}\cite{Kottwitz} 
We define the moduli functor $\mathfrak{F}_{\infty}$ from the category of 
$\Spec\Oo_{E,(v)}$-schemes 
to the category of sets:
\[S\mapsto \mathfrak{F}_{\infty}(S):=\{(\mcA/S, \lambda,\iota,\eta^p)\}/\sim\]
\begin{itemize}
    \item $(\mcA/S,\lambda,\iota)$ is a $\Z_{(p)}$-polarized abelian scheme
    over $S$, with an action of $\Oo_B$ which respects the determinant condition \cite[$\mathsection$ 5]{Kottwitz} 
    \item $\eta^p$ is the infinite level structure as defined in Definition \ref{PEL-infin-level}. 
\end{itemize}
\end{Defn}
From the definition, we clearly have $\mathfrak{F}_{\infty}=\underset{K^p}{\varprojlim}\mathfrak{F}_{K^pK_p}$, which is representable by a 
$\Spec\Oo_{E,(v)}$-scheme $\mathcal{S}_{\infty}^{(p)}:=\underset{K^p}{\varprojlim}\mathcal{S}_{K^p}$. 
We denote by $\mathscr{S}_{K_{\infty}}^{(p)}(\GSp,S^{\pm})$ the Siegel moduli space at infinite level away from $p$. 
\section{Hodge morphism and Equivalence of Models}\label{equivalence-of-models-section}
\subsection{Equivalence of Models}
In this section, we show that Rapoport \cite{Rapoport}, Kottwitz \cite{Kottwitz} and Kisin models \cite{Kisin-integral-model} agree, for appropriately chosen Shimura data. 
Let $(G,X)$ be a Shimura datum of PEL type (resp.~Hilbert modular type) 
with integral model 
$\mathcal{S}_{K}(G,X)$ in the sense of Kottwitz (resp.~Rapoport). 
First we make the following remark.
\begin{lem}\label{Kottwitz-properness-finite}
The Hodge morphism 
$\Phi_{K}:\mathcal{S}_K(G,X)\to \mathscr{S}_{K'}(\GSp,S^{\pm})$ 
is finite. 
\end{lem}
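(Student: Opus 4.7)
The plan is to establish finiteness via the standard combination of quasi-finiteness and properness; since $\Phi_K$ is a morphism of finite type between locally Noetherian schemes, Zariski's main theorem then upgrades this to finiteness. Both parts ultimately rest on the fact that the PEL extra-structure on an abelian scheme with good reduction is rigid.

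For quasi-finiteness, I fix a geometric point $\bar x$ of $\mathscr{S}_{K'}(\GSp, S^{\pm})$ corresponding to a polarized abelian variety $(A,\lambda)$ equipped with a Siegel level structure. A point in the fiber $\Phi_K^{-1}(\bar x)$ is then a pair $(\iota, \overline{\eta}^p)$ refining this datum. Since $\Oo_B$ is finitely generated as a $\Z_{(p)}$-module and $\End(A)\otimes \Q$ is a finite-dimensional semisimple $\Q$-algebra, there are only finitely many injective $*$-equivariant ring homomorphisms $\iota\colon \Oo_B \to \End(A)\otimes\Z_{(p)}$. For each such $\iota$, the $\iota$-equivariant $K^p$-orbits refining the given Siegel orbit form a finite set, since $K^p$ has finite index in its natural preimage inside $(K')^p$.

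For properness, I verify the valuative criterion. Let $R$ be a DVR over $\Oo_{E,(v)}$ with fraction field $F$, and suppose we are given an $R$-point of $\mathscr{S}_{K'}(\GSp,S^{\pm})$ together with an $F$-point of $\mathcal{S}_K(G,X)$ lifting its restriction. The $R$-point produces an abelian scheme $A_R$ with polarization $\lambda_R$, while the $F$-point endows $A_F$ with additional data $(\iota_F, \overline{\eta}^p_F)$. Because $A_R$ is an abelian scheme, it is the Néron model of $A_F$, so $\End(A_F) = \End(A_R)$; hence $\iota_F$ extends uniquely to $\iota_R \colon \Oo_B \to \End(A_R)\otimes\Z_{(p)}$, and the involution and determinant conditions pass to the extension since each is a closed condition. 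By the Néron--Ogg--Shafarevich criterion, $V^{(p)}(A_F)$ is unramified as a $\pi_1(\Spec F)$-module, and via the specialization isomorphism $V^{(p)}(A_F) \xrightarrow{\sim} V^{(p)}(A_s)$ for $s$ a geometric point of $\Spec R$, the orbit $\overline{\eta}^p_F$ descends to a $\pi_1(\Spec R)$-invariant $K^p$-orbit compatible with $\iota_R$. Uniqueness of the extension is immediate from uniqueness at each stage.

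The main obstacle is a careful treatment of the level structure: one must verify that the $K^p$-orbit is preserved under the specialization isomorphism and that the $\iota$-equivariance of $\overline{\eta}^p_F$ transfers to the extension. Both are standard consequences of good reduction and functoriality of the prime-to-$p$ Tate module, but the orbit formulation (as opposed to a single trivialization) requires a little bookkeeping to pin down.
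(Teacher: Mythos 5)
Your argument is correct and follows essentially the same route as the paper: the paper likewise deduces finiteness from properness (asserted via the valuative criterion, which your N\'eron-model argument fleshes out) together with discreteness of the fibres, after citing Kottwitz for quasi-projectivity of $\Phi_K$. The one place your justification is thinner than it should be is the count of $*$-equivariant maps $\iota$ --- finite-dimensionality of $\End(A)\otimes\Q$ alone does not bound the embeddings of $\Oo_B$ (Skolem--Noether conjugates form a positive-dimensional family, and e.g.\ $x^2=-1$ has infinitely many solutions in $M_2(\Z)$), so one needs the positivity of the Rosati involution, or the fact that $\iota$ is determined by the level structure via its action on the prime-to-$p$ Tate module, to conclude finiteness --- but the paper's own proof is equally terse on this point.
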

\begin{proof}
By \cite[p.391]{Kottwitz}, the morphism $\Phi_K$ is already quasi-projective. By the valuative criterion for properness, $\Phi_K$ is also proper, thus $\Phi_K$ is projective. 
On the other hand, since $\Phi_K$ has discrete fibres (given by the endomorphism structures that can be endowed on a principally polarized abelian scheme), thus $\Phi_K$ is a finite morphism.
\end{proof}
We give a different proof for this Lemma in \cite{fppf-finiteness}, using finiteness of fppf cohomology. Moreover, in the following sections (see Corollary \ref{finite-level-embedding}), we show a stronger result that the finite morphism $\Phi_K$ is in fact a closed embedding. Note that the Hodge morphism for a general Hodge type Shimura variety is the subject of study in \cite{xu2020normalization}. 

\begin{lem}\label{equiv-models}
When Shimura datum $(G,X)$ is of Hilbert modular type, the Kisin model $\mathscr{S}_{K_N}(G,X)$ agrees with Rapoport's integral model $\mathfrak{M}_N$. 

Likewise, when Shimura datum $(G,X)$ is of PEL type, the Kisin model agrees with Kottwitz's model $\mathcal{S}_K(G,X)$. 
\end{lem}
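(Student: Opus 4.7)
The plan is to compare both models via their common morphism to the Siegel integral model. Recall that Kisin's model $\mathscr{S}_K(G,X)$ is constructed as the normalization of the scheme-theoretic closure of the canonical Shimura variety $\Sh_K(G,X)_E$ inside the Siegel integral model $\mathscr{S}_{K'}(\GSp,S^{\pm})$, while Kottwitz's (resp.~Rapoport's) model $\mathcal{S}_K(G,X)$ has the same generic fibre and, at hyperspecial level, is smooth over $\Oo_{E,(v)}$, hence normal and flat.

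First, I would invoke Lemma \ref{Kottwitz-properness-finite} to conclude that the Hodge morphism $\Phi_K : \mathcal{S}_K(G,X) \to \mathscr{S}_{K'}(\GSp,S^{\pm})$ is finite, with generic fibre the standard closed immersion of Shimura varieties. Because $\mathcal{S}_K(G,X)$ is flat over $\Oo_{E,(v)}$, its generic fibre is scheme-theoretically dense, so the scheme-theoretic image of $\Phi_K$ coincides with the scheme-theoretic closure $\overline{\Sh_K(G,X)_E}$ of the generic fibre inside the Siegel model. This yields a finite, surjective morphism
\[
\mathcal{S}_K(G,X) \longrightarrow \overline{\Sh_K(G,X)_E}
\]
that is an isomorphism on generic fibres.

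Next, using normality of $\mathcal{S}_K(G,X)$ together with the universal property of the normalization, this morphism factors uniquely through Kisin's model $\mathscr{S}_K(G,X) \to \overline{\Sh_K(G,X)_E}$, producing a finite, birational morphism $\mathcal{S}_K(G,X) \to \mathscr{S}_K(G,X)$ between normal, flat $\Oo_{E,(v)}$-schemes with identical generic fibre. Since the target is normal, a standard application of Zariski's Main Theorem (or equivalently: a finite birational map with normal target is affine-locally an isomorphism, as a finite integral extension inside the field of fractions of an integrally closed domain is trivial) implies this morphism is an isomorphism, completing the identification.

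The main obstacle I anticipate is the bookkeeping around normality and flatness of Kottwitz's and Rapoport's models: at hyperspecial level these properties are classical consequences of smoothness, but to upgrade the argument to the parahoric level alluded to in the footnote of the introduction, one would need normality of the corresponding local models, which is more delicate. A secondary but milder point is verifying cleanly that the scheme-theoretic image of $\Phi_K$ is exactly the flat closure of the generic fibre; this again follows from flatness of the source together with finiteness (hence properness) of $\Phi_K$.
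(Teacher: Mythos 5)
Your proposal is correct and follows essentially the same route as the paper: finiteness of the Hodge morphism (Lemma \ref{Kottwitz-properness-finite}), identification of its scheme-theoretic image with the closure of the generic fibre, normality of the Kottwitz/Rapoport model coming from smoothness, and Zariski's Main Theorem applied to a finite birational morphism with normal target. The only (immaterial) difference is the direction of the intermediate comparison map: you factor $\mathcal{S}_K(G,X)\to\mathscr{S}_K(G,X)\to\overline{\Sh_K(G,X)_E}$ via the universal property of normalization applied to the normal source, whereas the paper factors the normalization map $\nu$ through $\mathfrak{M}_N$ to obtain a finite birational map $\mathscr{S}_{K_N}(G,X)\to\mathfrak{M}_N$ onto the normal target $\mathfrak{M}_N$; both conclude with Zariski's Main Theorem.
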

\begin{proof}
By Lemma \ref{Kottwitz-properness-finite}, the Hodge morphism 
\[\Phi_N:\mathfrak{M}_N\to \mathscr{S}_{K'_N}(\GSp,S^{\pm})\]
is a finite morphism. We denote by $(G,X)$ the Hilbert modular Shimura datum corresponding to the integral model $\mathfrak{M}_N$. Since $\mathscr{S}_{K_N}^-(G,X)$ is the closure of $\Sh_{K_N}(G,X)$ inside $\mathscr{S}_{K'_N}(\GSp,S^{\pm})$, $\Phi_N$ induces a map
\[\Phi_N^-:\mathfrak{M}_N\to \mathscr{S}_{K_N}^-(G,X)\]
which maps each irreducible component to an irreducible component, and moreover is birational, because $\mathfrak{M}_N$ and $\mathscr{S}_{K_N}^-(G,X)$ have the same generic fibre. Since $\Phi_N$ is a finite morphism, $\Phi_N^-$ is quasi-finite and proper, hence $\Phi_N^-$ is also a finite morphism. Since $\mathfrak{M}_N$ is smooth 
by \cite{Rapoport} (resp.~\cite{Kottwitz})
, in particular it is normal, and thus $\Phi_N^-$ is a finite birational morphism from a normal scheme. 
Thus the normalization map $\nu: \mathscr{S}_{K_N}(G,X)\to \mathscr{S}_{K_N}^-(G,X)$ factors through $\mathfrak{M}_N$, and the map $\mathscr{S}_{K_N}(G,X)\to \mathfrak{M}_N$ is finite birational with a normal target $\mathfrak{M}_N$, by Zariski's main theorem this map has to be an isomorphism, 
and thus we have
\[\mathfrak{M}_N\cong \mathscr{S}_{K_N}(G,X)\]
i.e. Rapoport's model agrees with Kisin's model. The same argument shows that Kottwitz integral model $\mathcal{S}_K(G,X)$, which is also smooth, agrees with Kisin's integral model $\mathscr{S}_K(G,X)$. Therefore Rapoport, Kottwitz and Kisin models are all compatible with one another. 
\end{proof}
Note that all of the results in the following sections apply to both Rapoport's Hilbert modular integral models \cite{Rapoport} and Kottwitz's PEL type integral models\cite{Kottwitz}.

\subsection{Hodge embedding at infinite level}
Consider the morphism 
$\Phi_{\infty}^{(p)}: \mathcal{S}_{\infty}^{(p)}(G,X)\to \mathscr{S}_{K_{\infty}}^{(p)}(\GSp,S^{\pm})$, 
where on the level of $S$-points, for a $\Spec\Oo_{E,(v)}$-scheme $S$, it is given by ``forgetting endomorphism structures''
\begin{align*}
    \Phi_{\infty}(S):\mathscr{S}^{(p)}_{\infty}(G,X)(S)&\to \mathscr{S}_{K_{\infty}}^{(p)}(\GSp,S^{\pm})(S)\\
    (\mcA/S,\lambda,\iota,\eta_{\infty}^{(p)})&\mapsto (\mcA/S,\lambda,\eta_{\infty}^{(p)})
\end{align*}
where the left-hand side has infinite level structure given by
\[\eta_{\infty}^{(p)}:\Lambda\otimes \A_f^p\xrightarrow{\cong}(\underset{p\nmid N}{\varprojlim}\mcA[N])\otimes \A_f^p= \big(\prod\limits_{\ell\neq p} T_{\ell}(\mcA)\big)\otimes\A_f^p\]
\begin{lem}\label{finendo}
(i) Let $\phi,\phi'\in\End(\mcA)$. If for a fixed integer $N$, we have\\
$\phi|_{\mcA[N]}=\phi'|_{\mcA[N]}$, 
then $\phi=\phi'\mod N\End(\mcA/S)$.

(ii) Let $\phi,\phi'\in\End(\mcA)$. If for infinitely many integers $N$, we have\\
$\phi|_{\mcA[N]}=\phi'|_{\mcA[N]}$, 
Then $\phi=\phi'$ as endomorphisms of $\mcA$.
\end{lem}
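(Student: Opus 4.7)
My plan is to treat (i) via the universal property of multiplication by $N$, and then deduce (ii) by intersecting the divisibility conclusions across the infinite family, after restricting to a geometric fibre to invoke the familiar finiteness of $\End$.

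For (i), set $\psi := \phi - \phi'$, so that the hypothesis becomes $\psi|_{\mcA[N]} = 0$. The key input is that $[N] : \mcA \to \mcA$ is finite faithfully flat with scheme-theoretic kernel $\mcA[N]$; hence $[N]$ is an fppf epimorphism and presents $\mcA$ as the quotient $\mcA/\mcA[N]$. Since $\psi$ is a homomorphism of group schemes killing $\ker([N])$, it factors uniquely through $[N]$, producing $\psi'' \in \End(\mcA/S)$ with $\psi = \psi'' \circ [N] = N \cdot \psi''$. Thus $\phi \equiv \phi' \bmod N\End(\mcA/S)$, as asserted.

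For (ii), let $\eta := \phi - \phi'$. Applying (i) to each integer $N$ in the infinite family gives $\eta \in N\End(\mcA/S)$ for arbitrarily large $N$ (an infinite set of positive integers is unbounded). Now pick a geometric point $\bar s$ in each connected component of $S$. By the standard rigidity of endomorphisms of abelian schemes, the specialization map $\End(\mcA/S) \hookrightarrow \End(\mcA_{\bar s})$ is injective on each component, so it suffices to show $\eta_{\bar s} = 0$. But $\End(\mcA_{\bar s})$ is a finitely generated, torsion-free (hence free of finite rank) $\Z$-module, and for any such module $M$ one has $\bigcap_{N} NM = 0$ when $N$ ranges over an unbounded family of positive integers. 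This forces $\eta_{\bar s} = 0$ and hence $\eta = 0$.

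The only step requiring any care is the factorization in (i): one needs $\psi$ to descend along $[N]$ when the base $S$ is arbitrary. Over a field this is immediate from the isomorphism $\mcA/\mcA[N] \cong \mcA$ induced by $[N]$; in general, one uses that $[N]$ is finite faithfully flat, so an fppf epimorphism, and invokes the universal property of fppf quotients for group schemes. The deduction of (ii) from (i) is then essentially formal, modulo the standard rigidity statement for endomorphisms of abelian schemes.
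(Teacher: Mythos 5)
Your proof is correct and follows essentially the same route as the paper: factor $\phi-\phi'$ through $[N]$ to get divisibility in $\End(\mcA/S)$, then observe that an element of a finitely generated torsion-free $\Z$-module divisible by unboundedly many integers must vanish. The only difference is that you supply justifications the paper leaves implicit (the fppf quotient argument for the factorization over a general base, and the specialization to a geometric fibre to see that $\End$ is finitely generated), which is a refinement rather than a different approach.
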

\begin{proof}
(i) We have $\phi-\phi'|_{\mcA[N]}=0$, i.e. $\ker(\phi-\phi')\supset\mcA[N]=\ker([N])$, thus the map $\phi-\phi'$ factors through the multiplication by $N$ map $[N]$.
Thus $\phi-\phi'\in N\End(\mcA/S)$.

(ii) For infinitely many integers $N$, we have $\phi-\phi'|_{\mcA[N]}=0$. Thus the map $\phi-\phi'$ factors through the multiplication by $N$ map $[N]$ for infinitely many integers $N$, i.e. $\phi-\phi'\in N\End(\mcA/S)$ for infinitely many integers $N$. Now, $\End(\mcA/S)\cong\Z^m$ as an abelian group (for some $m$), $\phi-\phi'$ as an element of it is divisible by infinitely many integers $N$, thus we must have $\phi-\phi'=0$, i.e. $\phi=\phi'$ as elements of $\End(\mcA/S)$.
\end{proof}

\begin{lem}\label{infendo2}
There is a unique injective ring homomorphism 
\[\gamma:\End_S(\mcA)\otimes_{\Z}\Z_{(p)}\hookrightarrow\End_{}\Big(\big(\prod\limits_{\ell\neq p} T_{\ell}(\mcA)\big)\otimes\A_f^p\Big)\]
\end{lem}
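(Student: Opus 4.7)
The plan is to construct $\gamma$ from the natural Tate module action of endomorphisms, and then to deduce injectivity from Lemma \ref{finendo}(ii), prime by prime.

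For each $\ell \neq p$, restriction of $\phi \in \End_S(\mcA)$ to the $\ell^n$-torsion subschemes gives a compatible system as $n$ varies, producing a ring homomorphism $\End_S(\mcA) \to \End_{\Z_\ell}(T_\ell(\mcA_s))$. Combining these over all $\ell \neq p$ and extending scalars to $\A_f^p$ on the target yields a ring homomorphism
$$\End_S(\mcA) \otimes_{\Z} \Z_{(p)} \longrightarrow \End\Bigl(\bigl(\prod_{\ell \neq p} T_\ell(\mcA)\bigr) \otimes \A_f^p\Bigr),$$
which will be our $\gamma$; the factor of $\Z_{(p)}$ on the source is accommodated by the diagonal inclusion $\Z_{(p)} \hookrightarrow \A_f^p$. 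Uniqueness is essentially tautological: any such ring map is determined by its restriction to $\End_S(\mcA) \otimes 1$, and this restriction must agree with the Tate module action, as that is the only ring homomorphism compatible with the identifications $\mcA[\ell^n]_s = T_\ell(\mcA_s)/\ell^n$.

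For injectivity, suppose $\alpha \in \ker \gamma$. Clearing a denominator (coprime to $p$), we may write $\alpha = \phi \otimes 1/n$ with $\phi \in \End_S(\mcA)$ and $\gcd(n,p)=1$, and then $\gamma(\phi \otimes 1) = n\gamma(\alpha) = 0$. Projecting to the $\ell$-component of $\A_f^p$ for each $\ell \neq p$ shows that $\phi$ acts trivially on $T_\ell(\mcA_s) \otimes_{\Z_\ell} \Q_\ell$; since $T_\ell(\mcA_s)$ embeds torsion-freely into this $\Q_\ell$-vector space, $\phi$ acts trivially on $T_\ell(\mcA_s)$, hence on every $\mcA[\ell^m]$ for $\ell \neq p$. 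This supplies infinitely many integers $N$ with $\phi|_{\mcA[N]} = 0$, and Lemma \ref{finendo}(ii) immediately yields $\phi = 0$, whence $\alpha = 0$.

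The main point to verify is the passage from vanishing on the $\A_f^p$-tensor down to vanishing on each individual $T_\ell(\mcA)$; torsion-freeness of $T_\ell(\mcA)$ handles this. One should also note that the argument is insensitive to the choice of geometric point $s$ on a connected component of $S$, since $\End_S(\mcA)$ acts through the same ring on every geometric fibre. Given these routine checks, Lemma \ref{finendo}(ii) closes the proof with no further work.
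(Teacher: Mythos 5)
Your proposal is correct and follows essentially the same route as the paper: construct $\gamma$ from the natural action of endomorphisms on the $\ell$-adic Tate modules, and reduce injectivity to Lemma \ref{finendo}(ii) by showing that an endomorphism in the kernel kills $\mcA[\ell^m]$ for infinitely many integers. The only organizational difference is that you verify injectivity after extending scalars to $\A_f^p$ (clearing denominators and using torsion-freeness of $T_\ell(\mcA)$), whereas the paper first proves each $\gamma_\ell$ injective and then invokes flatness of $\Z_{(p)}$; these are interchangeable.
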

\begin{proof}
For any $\phi\in\End(\mcA/S)$, it uniquely induces an endomorphism on the $\ell$-adic Tate module in the following way: by restriction we have a unique map $\phi|_{\mcA[\ell^n](\overline{k})}:\mcA[\ell^n](\overline{k})\xrightarrow{\phi[\ell^n]}\mcA[\ell^n](\overline{k})$, thus taking the inverse limit over $n$ on both sides we get the unique induced map $\gamma_{\ell}(\phi):T_{\ell}(\mcA)\to T_{\ell}(\mcA)$. Thus we indeed have a well-defined component map $\gamma_{\ell}: \End(\mcA/S)\to \End_{\Z_{\ell}}T_{\ell}(A)$ given by $\phi\mapsto \gamma_{\ell}(\phi)$.\\
To show that $\gamma_{\ell}$ is indeed injective: if $\gamma_{\ell}(\phi)=\gamma_{\ell}(\phi')$, then for infinitely many $n$, we have $\phi|_{\mcA[\ell^n]}=\phi'|_{\mcA[\ell^n]}$, thus by Lemma \ref{finendo}, we have that $\phi=\phi'$ as elements of $\End(\mcA/S)$. Thus $\gamma_{\ell}$ is injective. Taking the product over all $\ell\neq p$, we indeed have a well-defined injective map 
\[\prod\limits_{\ell\neq p}\gamma_{\ell}:\End(\mcA/S)\hookrightarrow\End_{}(\prod\limits_{\ell\neq p}T_{\ell}(\mcA))\]
We tensor the above map by $\Z_{(p)}$ and get
\begin{equation}\label{PEL-whatever}
    \prod\limits_{\ell\neq p}\gamma_{\ell}\otimes_{\Z}\Z_{(p)}:\End(\mcA/S)\otimes_{\Z}\Z_{(p)}\hookrightarrow\End_{}\Big(\prod\limits_{\ell\neq p}T_{\ell}(\mcA)\Big)\otimes_{\Z}\Z_{(p)}
\end{equation}
On the other hand, since 
$\A^p_f = \hat{\Z}^p \otimes_{\Z} \Z_{(p)}$, 
we have
\[\End_{\hat{\Z}^p}\Big(\prod\limits_{\ell\neq p}T_{\ell}(\mcA)\Big)\otimes_{\hat{\Z}^p}\A_f^p\cong\End_{}\Big(\prod\limits_{\ell\neq p}T_{\ell}(\mcA)\Big)\otimes_{\hat{\Z}^p}\hat{\Z}^p\otimes_{\Z}\Z_{(p)}\cong \End_{}\Big(\prod\limits_{\ell\neq p}T_{\ell}(\mcA)\Big)\otimes_{\Z}\Z_{(p)}\]
which is the right-hand side of (\ref{PEL-whatever}). On the other hand, we also have 
\[\End_{}\Big(\prod\limits_{\ell\neq p}T_{\ell}(\mcA)\Big)\otimes_{\hat{\Z}^p}\A_f^p\cong \End_{\A_f^p}\Big(\prod\limits_{\ell\neq p}T_{\ell}\mcA)\otimes_{\hat{\Z}^p}\A_f^p\Big)\]
Thus (\ref{PEL-whatever}) gives us
\[\prod\limits_{\ell\neq p}\gamma_{\ell}\otimes_{\Z}\Z_{(p)}:\End(\mcA/S)\otimes_{\Z}\Z_{(p)}\hookrightarrow\End_{}\Big(\prod\limits_{\ell\neq p}T_{\ell}(\mcA)\Big)\otimes_{\Z}\Z_{(p)}\cong \End_{\A_f^p}\Big((\prod\limits_{\ell\neq p}T_{\ell}\mcA)\otimes_{\hat{\Z}^p}\A_f^p\Big)\]
which is our desired $\gamma$.
\end{proof}
\begin{lem}\label{infendo} (i) The endomorphism structure $\iota:\Oo_B\hookrightarrow\End_S(\mcA)\otimes_{\Z}\Z_{(p)}$ induces a unique endomorphism structure on $V^{(p)}(\mcA)=(\prod\limits_{\ell\neq p} T_{\ell}(\mcA))\otimes \A_f^p$, i.e. $\iota$ uniquely induces an injective ring homomorphism \[\tilde{\iota}:\prod\limits_{\ell\neq p}\Oo_{B,\ell}\hookrightarrow\End\big(\prod\limits_{\ell\neq p} T_{\ell}(\mcA)\otimes\A_f^p\big)\]
(ii) Moreover, if $\tilde{\iota}=\tilde{\iota'}$, then $\iota=\iota'$. \\
i.e. the association $\iota\mapsto\tilde{\iota}$ is a one-to-one correspondence. 
\end{lem}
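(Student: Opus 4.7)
The plan is to extract Lemma \ref{infendo} essentially as a corollary of Lemma \ref{infendo2}. For part (i), I would first form the composition
\[\Oo_B\xrightarrow{\iota}\End_S(\mcA)\otimes_{\Z}\Z_{(p)}\xrightarrow{\gamma}\End\bigl(V^{(p)}(\mcA)\bigr),\]
which is an injective ring homomorphism because both $\iota$ and $\gamma$ are. Since $\Oo_B$ is finitely generated as a $\Z$-module and every element in the image of $\gamma$ acts $\A_f^p$-linearly on $V^{(p)}(\mcA)$, I can extend by $\hat{\Z}^p$-linearity, via the universal property of tensor product, to the desired map
\[\tilde{\iota}:\Oo_B\otimes_{\Z}\hat{\Z}^p\cong \prod_{\ell\neq p}\Oo_{B,\ell}\longrightarrow \End\bigl(V^{(p)}(\mcA)\bigr),\]
and this extension is unique.

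To verify injectivity of $\tilde{\iota}$, I would decompose componentwise. For each prime $\ell\neq p$, tensoring $\iota$ with $\Z_\ell$ and composing with the $\ell$-component $\gamma_\ell$ produced in Lemma \ref{infendo2} gives an injective ring map $\tilde{\iota}_\ell:\Oo_{B,\ell}\hookrightarrow \End(T_\ell(\mcA))$. Then $\tilde{\iota}$ is the product of the $\tilde{\iota}_\ell$, which is itself injective because a tuple in $\prod_{\ell\neq p}\Oo_{B,\ell}$ vanishes if and only if every component does.

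For part (ii), the key observation is that $\iota$ can be recovered from $\tilde{\iota}$ by restriction. If $\tilde{\iota}=\tilde{\iota'}$, then restricting both along the diagonal inclusion $\Oo_B\hookrightarrow \prod_{\ell\neq p}\Oo_{B,\ell}$ yields $\gamma\circ\iota=\gamma\circ\iota'$, and the injectivity of $\gamma$ from Lemma \ref{infendo2} forces $\iota=\iota'$. I do not anticipate a serious obstacle: once Lemma \ref{infendo2} is in place the argument is essentially formal, and the only point requiring care is the compatibility with $\A_f^p$-coefficients when passing from $\Oo_B$ to $\prod_{\ell\neq p}\Oo_{B,\ell}$. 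This is harmless because $\Oo_B$ is finitely generated over $\Z$, so the tensor product $\Oo_B\otimes_{\Z}\hat{\Z}^p$ is genuinely $\prod_{\ell\neq p}\Oo_{B,\ell}$ with no completion subtlety.
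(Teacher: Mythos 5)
Your proposal is correct and follows essentially the same route as the paper: the paper likewise reduces everything to the product-over-$\ell$ embedding $\End(\mcA/S)\otimes_{\Z}\Z_{\ell}\hookrightarrow\End_{\Z_{\ell}}(T_{\ell}\mcA)$ (its map $\beta$ is the same map as the $\gamma$ of Lemma \ref{infendo2}, just reassembled in place), using finite generation of $\End(\mcA/S)$ over $\Z$ to commute the tensor product with the product over $\ell$. The only cosmetic difference is in part (ii), where you recover $\iota$ by restricting $\tilde{\iota}$ along the diagonal $\Oo_B\hookrightarrow\prod_{\ell\neq p}\Oo_{B,\ell}$ and invoke injectivity of $\gamma$, while the paper instead cancels $\beta$ and appeals to flatness of $\Z_{\ell}$; both are one-line formal steps of equal strength.
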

\begin{proof}
(i) First note that, since $\Z_{\ell}$ is flat over $\Z$, tensoring with $\Z_{\ell}$ preserves injective maps. Thus we tensor $\iota$ with $\Z_{\ell}$ and obtain:
\[\iota\otimes_{\Z}\Z_{\ell}: \Oo_{B,\ell}:=\Oo_B\otimes_{\Z}\Z_{\ell}\hookrightarrow\End_S(\mcA)\otimes_{\Z}\Z_{(p)}\otimes_{\Z}\Z_{\ell}\]
On the other hand, recall the well-known result
\[\End(\mcA/S)\otimes_{\Z}\Z_{\ell}\hookrightarrow \End_{\Z_{\ell}}(T_{\ell}\mcA)\]
Taking the product of the above map over all $\ell\neq p$, and we get
\begin{equation}\label{1stEnd}\prod\limits_{\ell\neq p}\big(\End(\mcA/S)\otimes_{\Z}\Z_{\ell}\big)\hookrightarrow \prod\limits_{\ell\neq p}\big(\End_{\Z_{\ell}}(T_{\ell}\mcA)\big)\end{equation}
Now, since $\End(\mcA/S)$ is finitely generated as a $\Z$-module, the tensor product on the left-hand-side commutes with the product on the left-hand-side, thus the left-hand-side of the above map (\ref{1stEnd}) becomes 
$\End(\mcA/S)\otimes_{\Z}\big(\prod\limits_{\ell\neq p}\Z_{\ell}\big)$. 
On the other hand, the right-hand-side of the map (\ref{1stEnd}) is 
$\prod\limits_{\ell\neq p}\big(\End_{\Z_{\ell}}(T_{\ell}\mcA)\big)\cong \End_{\prod\limits_{\ell\neq p}\Z_{\ell}}\big(\prod\limits_{\ell\neq p}T_{\ell}(\mcA)\big)$, 
where the equality holds because $\Hom_{\hat{\Z}}(T_{\ell}(\mcA),T_{\ell'}(\mcA))=0$ for $\ell\neq\ell'$. 
Thus 
\[\beta:\End(\mcA/S)\otimes_{\Z}\big(\prod\limits_{\ell\neq p}\Z_{\ell}\big)\hookrightarrow\End_{\prod\limits_{\ell\neq p}\Z_{\ell}}\big(\prod\limits_{\ell\neq p}T_{\ell}(\mcA)\big)\]
Thus we have obtained
\[\tilde{\iota}=\beta\circ (\prod\limits_{\ell\neq p}\iota\otimes_{\Z}\Z_{\ell}):\prod\limits_{\ell\neq p}\Oo_{L,\ell}\hookrightarrow \End(\mcA/S)\otimes_{\Z}\big(\prod\limits_{\ell\neq p}\Z_{\ell}\big)\hookrightarrow\End_{\prod\limits_{\ell\neq p}\Z_{\ell}}\big(\prod\limits_{\ell\neq p}T_{\ell}(\mcA)\big)\]
Note that this map $\beta$ is uniquely defined as we did not make any choice in the above construction.

(ii) If $\tilde{\iota}=\tilde{\iota'}$, then by uniqueness and injectivity of $\beta$, we have 
\[\prod\limits_{\ell\neq p}\iota\otimes_{\Z}\Z_{\ell}=\prod\limits_{\ell\neq p}\iota'\otimes_{\Z}\Z_{\ell}\]
Since $\Z_{\ell}$ is a flat $\Z$-module, we have that $\iota=\iota'$.
\end{proof}

\begin{prop}\label{injectivityprop-PEL}
The map $\Phi_{\infty}(S)$ is injective for any 
$\Oo_{E,(v)}$-scheme $S$.
\end{prop}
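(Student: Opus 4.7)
The plan is to unwind the moduli-theoretic meaning of ``same image under $\Phi_\infty(S)$'' and then to show that the $\Z_{(p)}$-quasi-isogeny witnessing this equality in the Siegel moduli automatically respects the $\Oo_B$-actions. Concretely, suppose $(\mcA,\lambda,\iota,\eta^p)$ and $(\mcA',\lambda',\iota',\eta'^p)$ are two $S$-points of $\mathcal{S}^{(p)}_\infty(G,X)$ whose images in the Siegel moduli at infinite level agree. By the Siegel moduli description, this yields a $\Z_{(p)}$-quasi-isogeny $f\colon\mcA\to\mcA'$ over $S$ such that $f^{*}\lambda' = c\lambda$ for some $c\in \Z_{(p)}^\times$ and $V^{(p)}(f)\circ\eta^p = \eta'^p$. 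The task is to show that $f$ also intertwines $\iota$ and $\iota'$; this is exactly the condition needed to promote the Siegel-level equivalence to a PEL-level equivalence, hence to give injectivity of $\Phi_\infty(S)$.

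To this end, I would transport $\iota$ along $f$ to obtain a new $\Oo_B$-action $\iota'' := f\circ \iota\circ f^{-1}$ on $\mcA'$ (viewing $f^{-1}$ inside $\End_S(-)\otimes_{\Z}\Z_{(p)}$, which makes sense since $f$ is a $\Z_{(p)}$-quasi-isogeny), and aim to prove $\iota'' = \iota'$. The compatibility of the infinite level structure $\eta^p$ with $\iota$ (Definition \ref{PEL-infin-level}) says precisely that the induced action $\tilde\iota$ on $V^{(p)}(\mcA)$ from Lemma \ref{infendo} satisfies $\tilde\iota(b) = \eta^p\circ L_b\circ(\eta^p)^{-1}$ for all $b\in \Oo_B$, where $L_b$ denotes left multiplication by $b$ on $\Lambda\otimes\A_f^p$. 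Conjugating this identity by $V^{(p)}(f)$ and substituting $V^{(p)}(f)\circ\eta^p = \eta'^p$ yields
\[
\widetilde{\iota''}(b) \;=\; V^{(p)}(f)\circ\tilde\iota(b)\circ V^{(p)}(f)^{-1} \;=\; \eta'^p\circ L_b\circ (\eta'^p)^{-1} \;=\; \tilde{\iota'}(b),
\]
the last equality being the $\Oo_B$-compatibility of $\eta'^p$ with $\iota'$. Hence $\widetilde{\iota''} = \tilde{\iota'}$, and Lemma \ref{infendo}(ii) applied to $\mcA'$ forces $\iota'' = \iota'$. This shows $f$ intertwines the two $\Oo_B$-actions, so $(\mcA,\lambda,\iota,\eta^p)$ and $(\mcA',\lambda',\iota',\eta'^p)$ define the same class in $\mathfrak{F}_\infty(S)$.

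I do not anticipate a serious obstacle: once the bookkeeping between $\iota$ and its Tate-module shadow $\tilde\iota$ is in place, the argument is a diagram chase on the prime-to-$p$ rational Tate module, with the decisive step being the injectivity of the assignment $\iota\mapsto \tilde\iota$ supplied by Lemma \ref{infendo}(ii). The only mild subtlety is that the infinite level structure was defined relative to a geometric point of a connected component of $S$; since the conclusion $f\circ \iota(b) = \iota'(b)\circ f$ is an equality of morphisms of abelian schemes over $S$, applying the above componentwise globalizes the argument to all of $S$.
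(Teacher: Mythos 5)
Your proof is correct and rests on the same essential mechanism as the paper's: the $\Oo_B$-compatibility of the infinite level structure pins down the induced $\Oo_B$-action on the rational prime-to-$p$ Tate module as $b\mapsto \eta^p\circ L_b\circ(\eta^p)^{-1}$, and the injectivity of $\End_S(\mcA)\otimes_{\Z}\Z_{(p)}\hookrightarrow\End\big(V^{(p)}(\mcA)\big)$ from Lemmas \ref{infendo2} and \ref{infendo} then pins down $\iota$ itself. The only difference is that you work with two a priori distinct representatives linked by a $\Z_{(p)}$-quasi-isogeny $f$ and show that $f$ intertwines the actions, whereas the paper compares two actions $\iota,\iota'$ on one and the same tuple $(\mcA,\lambda,\eta_{\infty}^{(p)})$; your bookkeeping is if anything the more careful one, since points of the moduli functor are $\Z_{(p)}$-isogeny classes rather than literal tuples.
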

\textit{Idea of the Proof:} Injectivity of $\Phi_{\infty}(S)$ is equivalent to: take any $(\mcA/S,\lambda,\eta_{\infty}^{(p)})\in\Image(\Phi_{\infty}(S))$, there is only one preimage $(\mcA/S,\iota,\lambda,\eta_{\infty}^{(p)})\in\mathscr{S}^{(p)}_{\infty}(G,X)(S)$. Since the map $\Phi_{\infty}(S)$ is simply forgetting the endomorphism structure $\iota$, this is equivalent to saying that, given any $(\mcA/S,\lambda,\eta_{\infty})\in\Image(\Phi_{\infty}(S))$, there is only one possible endomorphism structure $\iota:\Oo_B\hookrightarrow\End(\mcA/S)\otimes_{\Z}\Z_{(p)}$ compatible with the other structures.

Suppose $(\mcA/S,\iota,\lambda,\eta_{\infty}^{(p)})\in\mathscr{S}^{(p)}_{\infty}(G,X)(S)$ is one pre-image point. We simply need to show that for any $\iota': \Oo_B\hookrightarrow\End(\mcA/S)\otimes_{\Z}\Z_{(p)}$ such that $\iota'\neq\iota$, we have $(\mcA/S,\iota',\lambda,\eta_{\infty}^{(p)})\notin\mathscr{S}^{(p)}_{\infty}(G,X)(S)$. The key condition that will fail in the moduli functor $\mathscr{S}^{(p)}_{\infty}(G,X)$ is the requirement that the level structure map $\eta_{\infty}^{(p)}$ has to be $\Oo_B$-linear. Details as follows.

\begin{proof}
First we consider the $\Oo_B$-action on $\big(\prod\limits_{\ell\neq p} T_{\ell}(\mcA)\big)\otimes\A_f^p$. Now, by Lemma \ref{infendo2}, $\iota:\Oo_B\hookrightarrow\End_S(\mcA)\otimes_{\Z}\Z_{(p)}$ induces a unique action of $\Oo_B$ on $\big(\prod\limits_{\ell\neq p} T_{\ell}(\mcA)\big)\otimes\A_f^p$ 
via composition $\gamma\circ\iota$ with the unique embedding
\[\gamma: \End_S(\mcA)\otimes_{\Z}\Z_{(p)}\hookrightarrow\End_{}\Big(\big(\prod\limits_{\ell\neq p} T_{\ell}(\mcA)\big)\otimes\A_f^p\Big).\] 
Recall that the infinite level map is given by
\[\eta_{\infty}^{(p)}:\Lambda\otimes \A_f^p\xlongrightarrow{\cong} \big(\prod\limits_{\ell\neq p} T_{\ell}(\mcA)\big)\otimes\A_f^p\]
and it satisfies the $\Oo_B$-linearity (or ``$\Oo_B$-compatibility'' if one prefers) condition: $\forall a\in\Oo_B$ and $\forall x\in \Lambda\otimes \A_f^p$, we have
\begin{equation}\label{pf2}\eta_{\infty}^{(p)}(a\cdot_B x)=a\underset{\gamma\circ\iota}{\cdot}\eta_{\infty}^{(p)}(x)=(\gamma\circ\iota(a))(\eta_{\infty}^{(p)}(x))\end{equation}
Now, for any $\iota'\neq\iota$, if $\eta_{\infty}^{(p)}$ still satisfied $\Oo_B$-linearity upon imposing $\iota'$ endomorphism structure, we would also have for $\forall a\in\Oo_B$ and $\forall x\in \Lambda\otimes \A_f^p$,
\begin{equation}\eta_{\infty}(a\cdot_B x)=a\underset{\gamma\circ\iota'}{\cdot}\eta_{\infty}^{(p)}(x)=(\gamma\circ\iota'(a))(\eta_{\infty}^{(p)}(x))\end{equation}
On the other hand, we combine it with equation \ref{pf2} to get: for $\forall a\in \Oo_B$ and $\forall x\in \Lambda\otimes \A_f^p$ (obviously we take the same $a\in\Oo_B$ and $x\in\Lambda\otimes \A_f^p$ on both sides)
\[(\gamma\circ\iota(a))(\eta_{\infty}^{(p)}(x))=(\gamma\circ\iota'(a))(\eta_{\infty}^{(p)}(x))\]
Since this holds for all $x\in \Lambda\otimes \A_f^p$, thus $\eta_{\infty}^{(p)}(x)$ ranges through all of \[\big(\prod\limits_{\ell\neq p} T_{\ell}(\mcA)\big)\otimes\A_f^p\]
and since $\gamma\circ\iota(a),\gamma\circ\iota'(a)\in \End_{}\Big(\big(\prod\limits_{\ell\neq p} T_{\ell}(\mcA)\big)\otimes\A_f^p\Big)$, thus we have: for all $a\in \Oo_B$,
\[(\gamma\circ\iota)(a)=(\gamma\circ\iota')(a)\]
Since $\gamma$ is injective by Lemma \ref{infendo2}, the above implies that $\iota(a)=\iota'(a)$ for all $a\in\Oo_B$, thus we have $\iota=\iota'$, contradiction. Thus $(\mcA/S,\iota',\lambda,\eta_{\infty}^{(p)})\notin\mathscr{S}^{(p)}_{\infty}(G,X)(S)$, and hence the injectivity of $\Phi_{\infty}(S)$.

Thus we have shown the uniqueness of endomorphism structure $\iota$ (should such a structure exist) in the case of infinite level structure; i.e. if $(\mcA/S,\iota,\lambda,\eta_{\infty}^{(p)})\in\mathscr{S}^{(p)}_{\infty}(G,X)(S)$ is a preimage point of some $(\mcA/S,\lambda,\eta_{\infty})\in\Image(\Phi(S))$, then for any $\iota'\neq\iota$, we must have $(\mcA/S,\iota',\lambda_R,\eta_{\infty})\notin\mathscr{S}^{(p)}_{\infty}(G,X)(S)$. Thus the map $\Phi_{\infty}(S)$ is injective. 
\end{proof}

\begin{remark}
Although we assume $\Oo_B$ is a maximal order of $B$ (a condition imposed in \cite{Kottwitz}) in Proposition \ref{injectivityprop-PEL}, the result holds for any order $\Oo$ of $B$ as we do not use the maximality of $\Oo_B$ in the proof \footnote{Should one like to consider an exotic PEL type model with non-maximal order action--such models do occur in nature sometimes}. 
\end{remark}

Since $\mathcal{S}_{K^p}$ (at finite level) is a quasi-projective 
$\Oo_{E,(v)}$-scheme by \cite[$\mathsection$ 5]{Kottwitz}, it is in particular locally Noetherian. 

\begin{remark}
For any $N\geq 1$, the diagram
\[\begin{tikzcd}
\mathscr{S}_{K_{N+1}}(G,X)\arrow[]{r}{}\arrow[]{d}{}&\mathscr{S}_{K_{N+1}}(\GSp,S^{\pm})\arrow[]{d}{}\\
\mathscr{S}_{K_N}(G,X)\arrow[]{r}{}&\mathscr{S}_{K_N}(\GSp,S^{\pm})
\end{tikzcd}\]
is not necessarily a fibre product diagram.
\end{remark}

The following proof applies to any PEL type integral models from \cite{Kottwitz}\footnote{The same method applies also to the PEL models in \cite{Rapoport-Zink}, but our expositions shall focus on the Kottwitz models.}.  
In particular, it applies to the Hilbert modular integral model \cite{Rapoport}. 
\begin{lem}\label{proper-m-infinity}
$\Phi_{\infty}$ is a proper morphism of 
$\Oo_{E,(v)}$-schemes.
\end{lem}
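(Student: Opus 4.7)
The plan is to verify the valuative criterion of properness directly for $\Phi_\infty$. Let $R$ be a valuation ring with fraction field $K$, and suppose given a commutative square
\[
\begin{tikzcd}
\Spec K \arrow[r] \arrow[d] & \mathcal{S}_\infty^{(p)}(G,X) \arrow[d, "\Phi_\infty"] \\
\Spec R \arrow[r] & \mathscr{S}_{K_\infty}^{(p)}(\GSp,S^\pm)
\end{tikzcd}
\]
in which the top arrow corresponds to PEL data $(\mcA_K,\lambda_K,\iota_K,\eta_{\infty,K}^p)$ over $K$ and the bottom to Siegel data $(\mcB_R,\mu_R,\zeta_{\infty,R}^p)$ over $R$. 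Commutativity provides an identification $(\mcB_K,\mu_K,\zeta_{\infty,K}^p) \simeq (\mcA_K,\lambda_K,\eta_{\infty,K}^p)$. The task is to construct an $\iota_R : \Oo_B \hookrightarrow \End_R(\mcB_R) \otimes_\Z \Z_{(p)}$ lifting $\iota_K$ and such that $(\mcB_R,\mu_R,\iota_R,\zeta_{\infty,R}^p)$ defines an $R$-point of $\mathcal{S}_\infty^{(p)}(G,X)$; uniqueness of this lift is already supplied by Proposition \ref{injectivityprop-PEL}, so only existence remains.

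The central input would be the Néron mapping property for abelian schemes over an integral base: every $K$-endomorphism of $\mcB_K$ extends uniquely to an $R$-endomorphism of $\mcB_R$. Applying this to each $\iota_K(a)$ for $a\in\Oo_B$ produces the candidate $\iota_R$, and the uniqueness clause automatically propagates the ring-homomorphism identities, injectivity of $\iota_R$, and the $\ast$-compatibility $\iota_R(a^\ast) = \iota_R(a)^\dagger$ from $K$ to $R$, since all are identities between endomorphisms known to hold on the generic fibre.

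It would then remain to verify the remaining Kottwitz conditions. The determinant condition is a polynomial identity in sections of the locally free $R$-module $\Lie \mcB_R$, and since $R$ is a domain it holds over $R$ as soon as it holds generically, which is given. The $\Oo_B$-linearity of $\zeta_{\infty,R}^p$ with respect to $\iota_R$ is a relation between morphisms of étale sheaves once a geometric base point of the connected scheme $\Spec R$ is fixed; placing this base point in the generic fibre reduces the check to the $\Oo_B$-linearity of $\eta_{\infty,K}^p$, which is part of the input datum. Combined with Proposition \ref{injectivityprop-PEL}, existence and uniqueness of the lift together give that $\Phi_\infty$ satisfies the valuative criterion, hence is proper.

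The main obstacle I anticipate is invoking the Néron mapping property over a general, possibly non-Noetherian, valuation ring $R$: for DVRs it is classical, and for general $R$ I would reduce to the Noetherian case by writing $R$ as a filtered colimit of its finitely generated $\Z$-subalgebras and spreading both the abelian schemes and their endomorphisms out to a finite stage of this colimit, so that the classical statement applies after base change.
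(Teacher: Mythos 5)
Your argument takes a genuinely different route from the paper's. The paper never runs the valuative criterion at infinite level: it takes the finite-level properness of $\Phi_K$ (Lemma \ref{Kottwitz-properness-finite}) as a black box, factors $\Phi_{\infty}$ as $\psi_M\circ\tilde f_{\infty}$, shows each finite-level map $\mathscr{S}_{K_{MN}}(G,X)\to \mathscr{S}_{K_M}(G,X)\times_{\mathscr{S}_{K_M}(\GSp,S^{\pm})}\mathscr{S}_{K_{MN}}(\GSp,S^{\pm})$ is a closed embedding (proper plus a monomorphism), passes to the inverse limit to get that $\tilde f_{\infty}$ is a closed embedding, and observes that $\psi_M$ is proper as a base change of $\Phi_M$. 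You instead re-prove the geometric content directly at infinite level, with the extension theorem for endomorphisms of abelian schemes over a normal base as the engine. Your route is more self-contained and makes the moduli-theoretic reason for properness transparent (it is essentially the argument underlying Kottwitz's own finite-level statement); the paper's route is purely formal but needs only the finite-level input and no non-Noetherian commutative algebra.

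Two points in your sketch need repair. The main one is the reduction to the Noetherian case: a finitely generated $\Z$-subalgebra $R_i\subset R$ of a valuation ring need not be normal, and the extension theorem $\End_{R_i}(\mcB)\xrightarrow{\sim}\End_{\Frac(R_i)}(\mcB_{\Frac(R_i)})$ genuinely requires a normal base, so ``the classical statement'' does not apply at the finite stage you land on. The fix is to replace each $R_i$ by its integral closure $\tilde R_i$ in $\Frac(R_i)$: this is again of finite type over $\Z$ (excellence), it is contained in $R$ because a valuation ring is integrally closed, and $R=\varinjlim \tilde R_i$, after which the Noetherian normal case applies; you should also say explicitly that you clear denominators, writing $\iota_K(a)=\tfrac1n\psi$ with $\psi\in\End(\mcA_K)$ and $n$ prime to $p$, before extending. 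The secondary point is that $\Phi_{\infty}$ is not of finite type, so the valuative criterion over all valuation rings (you cannot restrict to DVRs here, for the same reason) only yields that $\Phi_{\infty}$ is universally closed and separated; combined with the fact that $\Phi_{\infty}$ is affine, being a limit of finite morphisms, this makes it integral rather than proper in the finite-type sense. This is harmless for the paper's purposes, since a quasi-compact universally closed monomorphism is still a closed immersion and Proposition \ref{infinite-level-embedding} survives, but the literal statement of the lemma requires the finite-typeness that the paper extracts from its closed-embedding factorization. The remaining verifications you list (transport of $\iota$ along the prime-to-$p$ isogeny identifying $\mcB_K$ with $\mcA_K$, the determinant condition over the domain $R$, and $\Oo_B$-linearity of the level structure checked at a geometric point over the generic fibre) are fine, as is the appeal to Proposition \ref{injectivityprop-PEL} for the uniqueness half of the criterion.
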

\begin{proof} We do it in several steps.

\textit{Step 1. We show that 
$\mathscr{S}_{K_{MN}}(G,X)\xrightarrow{f}\mathscr{S}_{K_M}(G,X)\times_{\mathscr{S}_{K'_M}(\GSp,S^{\pm})}\mathscr{S}_{K'_{MN}}(\GSp,S^{\pm})$ 
is a proper morphism of schemes. }

To see this, first we note that the composition map
\[\mathscr{S}_{K_{MN}}(G,X)\xrightarrow{f}\mathscr{S}_{K_M}(G,X)\times_{\mathscr{S}_{K_M}(\GSp,S^{\pm})}\mathscr{S}_{K_{MN}}(\GSp,S^{\pm})\xrightarrow{g} \mathscr{S}_{K_{MN}}(\GSp,S^{\pm})\]
is proper 
by Lemma \ref{Kottwitz-properness-finite} (essentially the argument of \cite[$\mathsection$5]{Kottwitz}). Note that the map $\Phi_{MN}: \mathscr{S}_{K_{MN}}(G,X)\to \mathscr{S}_{K_{MN}}(\GSp,S^{\pm})$ indeed factors through the fibre product $\mathscr{S}_{K_M}(G,X)\times_{\mathscr{S}_{K_M}(\GSp,S^{\pm})}\mathscr{S}_{K_{MN}}(\GSp,S^{\pm})$ by the universal property of fibre products, and moreover the factorization map $f$ is unique. 

On the other hand, since the map 
$\Phi_N:\mathcal{S}_{K_N}(G,X)\to\mathscr{S}_{K_N}(\GSp,S^{\pm})$ 
is proper again by Lemma \ref{Kottwitz-properness-finite} (the standard argument of \cite[$\mathsection$ 5]{Kottwitz}), it is in particular separated. We view it as a morphism of schemes over $\mathscr{S}_{K_N}(\GSp,S^{\pm})$, and consider the base change of the morphism $\Phi_N$ along the (transition) morphism 
\[\mathscr{S}_{K_{MN}}(\GSp,S^{\pm})\to\mathscr{S}_{K_N}(\GSp,S^{\pm}),\]
thus its base change morphism 
\begin{align*}
\Phi_{K_N}\times_{\Id_{\mathscr{S}_{K_N}}(\GSp,S^{\pm})}\Id_{\mathscr{S}_{K_{MN}}(\GSp,S^{\pm})}&: \mathscr{S}_{K_N}(G,X)\times_{\mathscr{S}_{K_N}(\GSp,S^{\pm})}\mathscr{S}_{K_{MN}}(\GSp,S^{\pm})\\
&\to \mathscr{S}_{K_N}(\GSp,S^{\pm})\times_{\mathscr{S}_{K_N}(\GSp,S^{\pm})}\mathscr{S}_{K_{MN}}(\GSp,S^{\pm})\\
&=\mathscr{S}_{K_{MN}}(\GSp,S^{\pm})
\end{align*}
is also proper, thus separated, and this is precisely our morphism $g$. 
Now, since the composition $g\circ f$ is proper, and $g$ is separated, then $f$ is proper.

\textit{Step 2. We show that $\mathscr{S}_{K_{MN}}(G,X)\xrightarrow{f}\mathscr{S}_{K_M}(G,X)\times_{\mathscr{S}_{K_M}(\GSp,S^{\pm})}\mathscr{S}_{K_{MN}}(\GSp,S^{\pm})$ 
is a monomorphism in the category of schemes. }

To check this, we look at $f$ on the level of $S$-points, and we have
\begin{align*}
f(S):\mathscr{S}_{K_{MN}}(G,X)(S)&\to\mathscr{S}_{K_M}(G,X)(S)\times_{\mathscr{S}_{K_M}(\GSp,S^{\pm})(S)}\mathscr{S}_{K_{MN}}(\GSp,S^{\pm})(S)\\
\big(\mcA/S,\lambda,\iota,K_{MN}^p\eta^p\big)&\mapsto \Bigg(\big(\mcA/S,\lambda,\iota,K_M^p\eta^p\big),\big(\mcA/S,\lambda,K'^p_{MN}\eta'^p\big)\Bigg)
\end{align*}
To see that $f(S)$ is injective: given any point on the RHS, the first component 
$\big(\mcA/S,\lambda,\iota,\mcA[M]\xrightarrow{\sim}(\Oo_B/M\Oo_B)^2\big)$ 
fixes our 
$(\mcA/S,\iota,\lambda)$ 
and moreover, the second component also fixes an isomorphism 
$\mcA[MN]\xrightarrow{\sim}(\Z/MN\Z)^{2g}$, 
which uniquely fixes an isomorphism, by multiplication by $N$:
$\mcA[M]\xrightarrow{\sim}(\Z/M\Z)^{2g}$. 
Note that $(\Z/M\Z)^{2g}\xrightarrow{\sim}(\Oo_B/M\Oo_B)^2$ 
uniquely lifts to a map 
$(\Z/MN\Z)^{2g}\xrightarrow{\sim}(\Oo_B/MN\Oo_B)^2$, 
which then picks out a unique level $MN$-structure for $\mathscr{S}_K(G,X)$. 
Thus given any point on the right-hand side, we can uniquely pick out a point on the left-hand side which maps to the point on the right-hand side, so the morphism $f$ is a monomorphism.

\textit{Step 3: 
Steps 1 and 2 imply that 
\begin{equation}\label{MN-base-change}
    \mathscr{S}_{K_{MN}}(G,X)\xrightarrow{f}\mathscr{S}_{K_M}(G,X)\times_{\mathscr{S}_{K_M}(\GSp,S^{\pm})}\mathscr{S}_{K_{MN}}(\GSp,S^{\pm})
\end{equation}
is a closed embedding.}

Since $f$ is a closed embedding, the base change of $f$, along the morphism 
$\mathscr{S}_{K_{\infty}}^{(p)}(\GSp,S^{\pm})\to \mathscr{S}_{K_{MN}}(\GSp,S^{\pm})$ 
gives a closed embedding:
\begin{align*}\mathscr{S}_{K_{MN}}(G,X)\underset{{\mathscr{S}_{K_{MN}}(\GSp,S^{\pm})}}{\times}\mathscr{S}_{K_{\infty}}^{(p)}(\GSp,S^{\pm})&\xrightarrow{f}\mathscr{S}_{K_M}(G,X)\underset{{\mathscr{S}_{K_M}(\GSp,S^{\pm})}}{\times}\mathscr{S}_{K_{MN}}(\GSp,S^{\pm})\underset{{\mathscr{S}_{K_{MN}}}}{\times}\mathscr{S}_{K_{\infty}}^{(p)}(\GSp
)\\
&\cong \mathscr{S}_{K_M}(G,X)\times_{\mathscr{S}_{K_M}(\GSp,S^{\pm})}\mathscr{S}_{K_{\infty}}^{(p)}(\GSp,S^{\pm})
\end{align*}
We denote the above obtained map as
\[\tilde{f}_{N,1}: \mathscr{S}_{K_{MN}}(G,X)\times_{\mathscr{S}_{K_{MN}}(\GSp,S^{\pm})}\mathscr{S}_{K_{\infty}}^{(p)}(\GSp,S^{\pm})\to \mathscr{S}_{K_M}(G,X)\times_{\mathscr{S}_{K_M}(\GSp,S^{\pm})}\mathscr{S}_{K_{\infty}}^{(p)}(\GSp,S^{\pm})\]
which is a closed embedding.

\textit{Step 4.} We take the inverse limit of the morphism $\tilde{f}_N$ over all $N$, i.e. consider
\[\tilde{f}_{\infty}:\underset{N}{\varprojlim}\big(\mathscr{S}_{K_{MN}}(G,X)\times_{\mathscr{S}_{K_{MN}}(\GSp,S^{\pm})}\mathscr{S}_{K_{\infty}}^{(p)}(\GSp,S^{\pm})\big)\to \underset{N}{\varprojlim}\mathscr{S}_{K_M}(G,X)\times_{\mathscr{S}_{K_M}(\GSp,S^{\pm})}\mathscr{S}_{K_{\infty}}^{(p)}(\GSp,S^{\pm})\]
Since taking inverse limit commutes with taking fibre product, the left-hand side of the map $\tilde{f}_{\infty}$ is: 
\begin{align*}
\underset{N}{\varprojlim}\big(\mathscr{S}_{K_{MN}}(G,X)\times_{\mathscr{S}_{K_{MN}}(\GSp,S^{\pm})}\mathscr{S}_{K_{\infty}}^{(p)}(\GSp,S^{\pm})\big)&=\big(\underset{N}{\varprojlim}\mathscr{S}_{K_{MN}}(G,X)\big)\times_{\underset{N}{\varprojlim}\mathscr{S}_{K_{MN}}(\GSp,S^{\pm})}\mathscr{S}_{K_{\infty}}(\GSp,S^{\pm})\\
&=\mathscr{S}_{\infty}^{(p)}(G,X)\times_{\mathscr{S}_{K_{\infty}}(\GSp,S^{\pm})}\mathscr{S}_{K_{\infty}}(\GSp,S^{\pm})\cong \mathscr{S}_{\infty}^{(p)}(G,X)
\end{align*}
whereas the right-hand side of the map $\tilde{f}_{\infty}$ stays the same as it does not depend on $N$ at all. 
Thus $\tilde{f}_{\infty}=\underset{N}{\varprojlim}\tilde{f}_N$ becomes
\[\tilde{f}_{\infty}: \mathscr{S}_{\infty}^{(p)}(G,X)\to \mathscr{S}_{K_M}(G,X)\times_{\mathscr{S}_{K_M}(\GSp,S^{\pm})}\mathscr{S}_{K_{\infty}}(\GSp,S^{\pm}).\] 
Since each $\tilde{f}_N$ is a closed embedding, and moreover the transition maps are closed embeddings, $\tilde{f}_{\infty}$ is a closed embedding. 
We shall give a full proof for this claim in Lemma \ref{limit-closed-embedding}.

\textit{Step 4. We show that the projection morphism
\[\psi_M: \mathscr{S}_{K_M}(G,X)\times_{\mathscr{S}_{K_M}(\GSp,S^{\pm})}\mathscr{S}_{K_{\infty}}^{(p)}(\GSp,S^{\pm})\to \mathscr{S}_{K_{\infty}}^{(p)}(\GSp,S^{\pm})\]
is proper.} 

This is simply because: 
$\Phi_M: \mathscr{S}_{K_M}(G,X)\to\mathscr{S}_{K_M}(\GSp,S^{\pm})$ 
is proper (by Lemma \ref{Kottwitz-properness-finite}), so when we take base change of the morphism $\Phi_M$ along the projection morphism
$\mathscr{S}_{K_{\infty}}^{(p)}(\GSp,S^{\pm})\to \mathscr{S}_{K_M}(\GSp,S^{\pm})$ 
and consider 
\begin{align*}
\psi_M:\mathscr{S}_{K_M}(G,X)\times_{\mathscr{S}_{K_M}(\GSp,S^{\pm})}\mathscr{S}_{K_{\infty}}(\GSp,S^{\pm})
&\to\mathscr{S}_{K_M}(\GSp,S^{\pm})\times_{\mathscr{S}_{K_M}(\GSp,S^{\pm})}\mathscr{S}_{K_{\infty}(\GSp,S^{\pm})}\\
&=\mathscr{S}_{K_{\infty}}(\GSp,S^{\pm})
\end{align*}
The above morphism 
$\psi_M=\Phi_M\times_{\Id_{\mathscr{S}_{K_M}}(\GSp,S^{\pm})}\Id_{\mathscr{S}_{K_{\infty}}(\GSp,S^{\pm})}$, 
as the base change of a proper morphism $\Phi_M$, is also proper.

\textit{Step 5.} Finally, the morphism 
$\Phi_{\infty}^{(p)}:\mathscr{S}_{\infty}^{(p)}(G,X)\to\mathscr{S}_{K_{\infty}}^{(p)}(\GSp,S^{\pm})$, 
which, as a composition of proper morphisms
\[\tilde{f}_{\infty}:\mathscr{S}_{\infty}^{(p)}(G,X)\to \mathscr{S}_{K_M}(G,X)\times_{\mathscr{S}_{K_M}(\GSp,S^{\pm})}\mathscr{S}_{K_{\infty}}^{(p)}(\GSp,S^{\pm})\]
and 
$\psi_M:\mathscr{S}_{K_M}(G,X)\times_{\mathscr{S}_{K_M}(\GSp,S^{\pm})}\mathscr{S}_{K_{\infty}}^{(p)}(\GSp,S^{\pm})\to\mathscr{S}_{K_{\infty}}^{(p)}(\GSp,S^{\pm})$, 
is proper. (Note that the map $\Phi_{\infty}^{(p)}$ indeed factors through the map $\tilde{f}_{\infty}$ by the universal property of fibre products, and moreover the map $\tilde{f}_{\infty}$ is unique.) 
Now $\Phi_{\infty}=\psi_M\circ \tilde{f}_{\infty}$ as a composition of proper morphisms is proper.
\end{proof}

We finish up the above proof by the following lemma.
\begin{lem}\label{limit-closed-embedding}
$\tilde{f}_{\infty}$ is a closed embedding.
\end{lem}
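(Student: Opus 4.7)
The plan is to exhibit $\tilde{f}_{\infty}$ as a closed embedding into a fixed ambient scheme by passing from the inverse system of closed ideal sheaves to their union. Set
\[Y := \mathscr{S}_{K_M}(G,X)\times_{\mathscr{S}_{K_M}(\GSp,S^{\pm})}\mathscr{S}_{K_{\infty}}^{(p)}(\GSp,S^{\pm}),\qquad X_N := \mathscr{S}_{K_{MN}}(G,X)\times_{\mathscr{S}_{K_{MN}}(\GSp,S^{\pm})}\mathscr{S}_{K_{\infty}}^{(p)}(\GSp,S^{\pm}),\]
so that each $\tilde{f}_N : X_N \to Y$ is a closed embedding by Step 3 of the proof of Lemma \ref{proper-m-infinity}, and $Y$ does not depend on $N$. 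First I would verify that the transition maps $X_{N+1} \to X_N$ in the inverse system are themselves closed embeddings: they factor as $\tilde{f}_{N+1}$ followed by the identity of $Y$ on the target side, and the monomorphism argument from Step 2 (applied to $MN$ in place of $M$) together with the properness from Step 1 upgrades each transition to a closed embedding.

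Next I would realize every $X_N$ as a closed subscheme of the single ambient $Y$, cut out by a quasi-coherent ideal sheaf $\mathcal{I}_N \subset \mathcal{O}_Y$, and observe that the transition $X_{N+1} \hookrightarrow X_N$ being a closed embedding over $Y$ translates to the nested chain $\mathcal{I}_1 \subset \mathcal{I}_2 \subset \cdots \subset \mathcal{O}_Y$. Let $\mathcal{I}_{\infty} := \bigcup_N \mathcal{I}_N$, which is quasi-coherent as a filtered colimit of quasi-coherent sheaves, and let $X_{\infty} := V(\mathcal{I}_{\infty}) \hookrightarrow Y$ be the corresponding closed subscheme. By the universal property of ideal-sheaf quotients, a morphism $T \to Y$ factors through every $X_N$ if and only if the image of each $\mathcal{I}_N$ in $\mathcal{O}_T$ vanishes, i.e. if and only if $T \to Y$ factors through $X_{\infty}$; this shows $X_{\infty}$ represents $\varprojlim_N X_N$ in the category of schemes.

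Finally I would identify this inverse limit with the source of $\tilde{f}_{\infty}$. Since inverse limits commute with fibre products (as already used in Step 4), we have
\[\varprojlim_N X_N \;=\; \bigl(\varprojlim_N \mathscr{S}_{K_{MN}}(G,X)\bigr)\times_{\varprojlim_N \mathscr{S}_{K_{MN}}(\GSp,S^{\pm})}\mathscr{S}_{K_{\infty}}^{(p)}(\GSp,S^{\pm}) \;\cong\; \mathscr{S}_{\infty}^{(p)}(G,X),\]
where the last isomorphism is the one already observed at the start of Step 4. Under this identification, the embedding $\varprojlim_N X_N = X_{\infty} \hookrightarrow Y$ is precisely $\tilde{f}_{\infty}$, which is therefore a closed embedding.

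The main obstacle I expect is bookkeeping: one must check carefully that the closed embeddings $\tilde{f}_N$ assemble into a diagram commuting \emph{over} the fixed target $Y$ (so that the ideal sheaves really do form a nested chain rather than merely a compatible system), and that the filtered union $\mathcal{I}_{\infty}$ is the ideal sheaf that correctly represents the limit — both of which rely on the fact that the transition maps of $\mathscr{S}_{K_{MN}}^{(p)}(\GSp,S^{\pm})$ are affine, so that no subtleties arise in commuting $\varprojlim$ with $V(-)$ or with the fibre product.
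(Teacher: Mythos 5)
Your proposal is correct and follows essentially the same route as the paper: both arguments realize each $X_N$ as a closed subscheme of the fixed target $Y$, use the fact that the transition maps are themselves closed embeddings compatible over $Y$ to get an ascending chain of ideals, and identify the limit with the vanishing locus of the union $\bigcup_N \mathcal{I}_N$. The only difference is cosmetic — you phrase it with quasi-coherent ideal sheaves globally, while the paper works affine-locally with $\Spec(R/I_{N,1})$ and the identity $\varprojlim_N \Spec(R/I_{N,1}) = \Spec\bigl(R/\bigcup_N I_{N,1}\bigr)$.
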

\begin{proof}
Recall our finite step maps
\[\tilde{f}_{N,1}: \mathscr{S}_{K_{MN}}(G,X)\times_{\mathscr{S}_{K_{MN}}(\GSp,S^{\pm})}\mathscr{S}_{K_{\infty}}^{(p)}(\GSp,S^{\pm})\to \mathscr{S}_{K_M}(G,X)\times_{\mathscr{S}_{K_M}(\GSp,S^{\pm})}\mathscr{S}_{K_{\infty}}^{(p)}(\GSp,S^{\pm})\]
and we have an inverse limit
\[\tilde{f}_{\infty}:=\underset{N}{\varprojlim}\tilde{f}_{N,1}:\underset{N}{\varprojlim}\big(\mathscr{S}_{K_{MN}}(G,X)\underset{{\mathscr{S}_{K_{MN}}(\GSp,S^{\pm})}}{\times}\mathscr{S}_{K_{\infty}}^{(p)}(\GSp,S^{\pm})\big)\to \mathscr{S}_{K_M}(G,X)\underset{{\mathscr{S}_{K_M}(\GSp,S^{\pm})}}{\times}\mathscr{S}_{K_{\infty}}^{(p)}(\GSp,S^{\pm})\]
To show that $\tilde{f}_{\infty}$ is a closed embedding: take any affine open 
\[\Spec R=U\subset \mathscr{S}_{K_M}(G,X)\times_{\mathscr{S}_{K_M}(\GSp,S^{\pm})}\mathscr{S}_{K_{\infty}}^{(p)}(\GSp,S^{\pm})\]
and we have 
$\tilde{f}_{\infty}^{-1}(U)=\big(\underset{N}{\varprojlim}\tilde{f}_{N,1}\big)^{-1}(U)=\underset{N}{\varprojlim}\big(\tilde{f}_{N,1}^{-1}(U)\big)$.

Since each $\tilde{f}_{N,1}$ is a closed embedding, there exists an ideal $I_{N,1}\subset R$, for each $N$, such that 
$\tilde{f}_{N,1}^{-1}(U)=\Spec (R/I_{N,1})$ 
as schemes over $U=\Spec R$. Thus we have 
$\tilde{f}_{\infty}^{-1}(U)=\underset{N}{\varprojlim}\big(\tilde{f}_{N,1}^{-1}(U)\big)=\underset{N}{\varprojlim}\Spec(R/I_{N,1})$. 
To simplify the above formula, first we note that 
$\underset{N}{\varprojlim}\Spec(R/I_{N,1})=\Spec\big(\underset{N}{\varinjlim}(R/I_{N,1})\big)$.

Next we note that
$\underset{N}{\varinjlim}(R/I_{N,1})=R/\bigcup\limits_NI_{N,1}$. To see this: recall that we showed
\[\tilde{f}_{N,1}: \mathscr{S}_{K_{MN}}(G,X)\times_{\mathscr{S}_{K_{MN}}(\GSp,S^{\pm})}\mathscr{S}_{K_{\infty}}^{(p)}(\GSp,S^{\pm})\to \mathscr{S}_{K_M}(G,X)\times_{\mathscr{S}_{K_M}(\GSp,S^{\pm})}\mathscr{S}_{K_{\infty}}^{(p)}(\GSp,S^{\pm})\]
is a closed embedding, for any $M,N$. 
Now we plug in $M\to MN$ and $N\to q$, and still get a closed embedding
\[\tilde{f}_{Nq,N}: \mathscr{S}_{K_{MNq}}(G,X)\times_{\mathscr{S}_{K_{MNq}}(\GSp,S^{\pm})}\mathscr{S}_{K_{\infty}}^{(p)}(\GSp,S^{\pm})\to \mathscr{S}_{K_{MN}}(G,X)\times_{\mathscr{S}_{K_{MN}}(\GSp,S^{\pm})}\mathscr{S}_{K_{\infty}}^{(p)}(\GSp,S^{\pm})\]
Thus we have a series of closed embeddings: (indexing by divisibility)
\begin{align*}
\cdots\hookrightarrow\mathscr{S}_{K_{MNq}}(G,X)\times_{\mathscr{S}_{K_{MNq}}(\GSp,S^{\pm})}\mathscr{S}_{K_{\infty}}^{(p)}(\GSp,S^{\pm})&\xhookrightarrow{\tilde{f}_{Nq,N}}\mathscr{S}_{K_{MN}}(G,X)\times_{\mathscr{S}_{K_{MN}}(\GSp,S^{\pm})}\mathscr{S}_{K_{\infty}}^{(p)}(\GSp,S^{\pm})\\
&\xhookrightarrow{\tilde{f}_{N,1}} \mathscr{S}_{K_M}(G,X)\times_{\mathscr{S}_{K_M}(\GSp,S^{\pm})}\mathscr{S}_{K_{\infty}}^{(p)}(\GSp,S^{\pm})
\end{align*}
Also note that the transition maps in our system satisfy
$\tilde{f}_{Nq,1}=\tilde{f}_{N,1}\circ\tilde{f}_{Nq,N}$. 
Thus for any affine open $\Spec R=U\subset \mathscr{S}_{K_M}(G,X)\times_{\mathscr{S}_{K_M}(\GSp,S^{\pm})}\mathscr{S}_{K_{\infty}}^{(p)}(\GSp,S^{\pm})$, we have
$\tilde{f}_{N,1}^{-1}(U)=\Spec R/I_{N,1}$ 
as schemes over $U=\Spec R$, for some ideal $I_{N,1}\subset R$. On the other hand, since
$\tilde{f}_{Nq,1}$ is also a closed embedding, we have 
\[\tilde{f}_{Nq,1}^{-1}(U)=\Spec R/I_{Nq,1}\] 
for some ideal $I_{Nq,1}\subset R$. And since $\tilde{f}_{Nq,1}=\tilde{f}_{N,1}\circ\tilde{f}_{Nq,N}$, we have 
\[\Spec R/I_{Nq,1}=\tilde{f}_{Nq,1}^{-1}(U)=\tilde{f}_{Nq,N}^{-1}\circ\tilde{f}_{N,1}^{-1}(U)=\tilde{f}_{Nq,N}^{-1}(\Spec R/I_{N,1})\]
Since $\tilde{f}_{Nq,N}$ is also a closed embedding, and $\Spec R/I_{N,1}$ is an open affine of $\mathscr{S}_{K_{MN}}(G,X)\times_{\mathscr{S}_{K_{MN}}(\GSp,S^{\pm})}\mathscr{S}_{K_{\infty}}^{(p)}(\GSp,S^{\pm})$, thus 
\[\Spec R/I_{Nq,1}=\tilde{f}_{Nq,N}^{-1}(\Spec R/I_{N,1})=\Spec (R/I_{N,1})/I'\]
for some ideal $I'\subset R/I_{N,1}$, and thus 
$R/I_{Nq}=(R/I_N)/I'$. 
Thus as ideals of $R$, we have
$I_N\subset I_{Nq}$. 
Thus the ideals (ordered by divisibility of subscripts) $I_N\subset I_{Nq}\subset\cdots$ form an ascending chain of ideals in ring $R$, and thus 
\[\underset{N}{\varinjlim}(R/I_N)=R/\underset{N}{\varinjlim}I_N=R/\bigcup\limits_NI_N\]
And since we have the ascending chain of ideals, $\bigcup\limits_NI_N$ is indeed an ideal of $R$, and thus $\tilde{f}_{\infty}$ is indeed a closed embedding.
\end{proof}

We have now completed the proof of Lemma \ref{proper-m-infinity}. The following proposition is now an easy corollary. 
\begin{prop}\label{infinite-level-embedding}
$\Phi_{\infty}$ is a closed embedding. 
\end{prop}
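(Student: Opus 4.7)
The plan is to deduce that $\Phi_{\infty}$ is a closed embedding by combining the two ingredients already in hand: the functorial injectivity on $S$-points from Proposition \ref{injectivityprop-PEL}, and the properness from Lemma \ref{proper-m-infinity}. The organizing principle is the standard fact that a proper monomorphism of schemes is a closed immersion (e.g.\ EGA IV, or Stacks Project). So the argument reduces to verifying that $\Phi_{\infty}^{(p)}$ is a monomorphism in the category of $\Oo_{E,(v)}$-schemes.

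First I would record the monomorphism property. By definition, $\Phi_{\infty}^{(p)}$ is a monomorphism iff the map $\Phi_{\infty}^{(p)}(T)$ on $T$-points is injective for every $\Oo_{E,(v)}$-scheme $T$. This is precisely the content of Proposition \ref{injectivityprop-PEL}: the endomorphism structure $\iota$ is forced by the triple $(\mcA,\lambda,\eta_{\infty}^{(p)})$ via the $\Oo_B$-linearity of the level structure and the injectivity of the induced map $\gamma$ from Lemma \ref{infendo2}. Thus no information is lost in forgetting $\iota$, and $\Phi_{\infty}^{(p)}$ is a monomorphism.

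Next, combining the monomorphism property with Lemma \ref{proper-m-infinity} (which shows $\Phi_{\infty}^{(p)}$ is proper), we invoke the general fact: a proper monomorphism of schemes is automatically a closed immersion. Indeed, a monomorphism of schemes is universally injective and unramified, and a proper, universally injective, unramified morphism is a closed immersion. Equivalently, one may argue that a proper monomorphism is quasi-finite, hence finite (properness $+$ quasi-finiteness), and a finite monomorphism is a closed immersion.

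The main subtlety I anticipate is that $\mathscr{S}_{\infty}^{(p)}(G,X)$ and $\mathscr{S}_{K_{\infty}}^{(p)}(\GSp,S^{\pm})$ are not of finite type (they arise as inverse limits along affine transition maps), so one needs the non-Noetherian version of ``proper monomorphism $\Rightarrow$ closed immersion.'' This is available in the Stacks Project framework since all the finite-level maps $\Phi_{K^p}$ are already finite (Lemma \ref{Kottwitz-properness-finite}) and monomorphic in the limit, and the transition morphisms on both sides are affine; this is consistent with the inverse-limit argument already worked out in detail in Lemma \ref{limit-closed-embedding} of the proof of Lemma \ref{proper-m-infinity}. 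So in fact one can alternatively bypass the general theorem and prove the proposition directly by passing to the limit of the finite-level closed embeddings $\tilde{f}_{N,1}$ constructed in Step 3 of Lemma \ref{proper-m-infinity}, using the same ``union of ideals in an affine chart'' argument as in Lemma \ref{limit-closed-embedding}.
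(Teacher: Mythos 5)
Your proposal is correct and follows exactly the paper's own argument: Proposition \ref{injectivityprop-PEL} gives injectivity on $S$-points, hence a monomorphism, and combining with the properness from Lemma \ref{proper-m-infinity} one concludes via the standard fact that a proper monomorphism is a closed immersion. The extra remarks you add about the non-Noetherian setting and the alternative limit argument are sensible but not part of the paper's (shorter) proof.
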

\begin{proof}
By Proposition \ref{injectivityprop-PEL}, $\Phi_{\infty}^{(p)}$ is injective on the level of $S$-points for any 
$\Oo_{E,(v)}$-scheme $S$, thus 
$\Phi_{\infty}$ is a monomorphism as a morphism of 
$\Oo_{E,(v)}$-schemes. On the other hand, by Lemma \ref{proper-m-infinity}, $\Phi_{\infty}$ is also a proper morphism, thus it is a proper monomorphism, and thus it is a closed embedding. 
\end{proof}

\begin{remark}\label{non-maximal-order-remark}
Although we assume $\Oo_B$ is a maximal order of $B$ (a condition imposed in \cite{Kottwitz}) in Proposition \ref{infinite-level-embedding}, the result holds for any order $\Oo$ of $B$ as we do not use the maximality of $\Oo_B$ in the proof of Lemma \ref{proper-m-infinity}, where the cited result from \cite[$\mathsection$5]{Kottwitz} only uses the representability result from the theory of Hilbert schemes, and holds regardless of whether $\Oo$ is a maximal order or not\footnote{Again should one like to consider an exotic PEL type model with non-maximal order action}. 
\end{remark}

\subsection{Descent down to finite level}\label{descent-down-section}
\begin{Coro}\label{finite-level-embedding}
There exist $K\subset G(\A_f)$ and $K'\subset \GSp(\A_f)$ such that the map 
\[\Phi_{K,K'}:\mathscr{S}_K(G,X)\to \mathscr{S}_{K'}(\GSp,S^{\pm})\]
is a closed embedding. 
\end{Coro}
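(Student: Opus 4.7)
The plan is to descend the closed embedding $\Phi_{\infty}$ from Proposition \ref{infinite-level-embedding} down to finite level, exploiting the finiteness of the Hodge morphism at finite level (Lemma \ref{Kottwitz-properness-finite}) together with the standard fact that a proper monomorphism of schemes is automatically a closed embedding.

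Concretely, I would fix any sufficiently small compact open subgroup $K'^p \subset \GSp(\A_f^p)$, set $K^p := K'^p \cap G(\A_f^p)$, and take $K := K^p K_p$ and $K' := K'^p K'_p$. With these compatible choices the Hodge morphism $\Phi_{K,K'}: \mathscr{S}_K(G,X) \to \mathscr{S}_{K'}(\GSp, S^{\pm})$ is defined and, by Lemma \ref{Kottwitz-properness-finite}, it is finite, in particular proper. Since a proper monomorphism is a closed embedding, it suffices to show $\Phi_{K,K'}$ is a monomorphism, i.e.\ that $\mathscr{S}_K(G,X)(T) \to \mathscr{S}_{K'}(\GSp, S^{\pm})(T)$ is injective for every $\Oo_{E,(v)}$-scheme $T$.

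To verify injectivity on $T$-points, I would take two $T$-points $x_1 = (\mcA_1, \lambda_1, \iota_1, \bar\eta_1^p)$ and $x_2 = (\mcA_2, \lambda_2, \iota_2, \bar\eta_2^p)$ of $\mathscr{S}_K(G,X)$ whose images in $\mathscr{S}_{K'}(\GSp, S^{\pm})$ agree, lift them along a pro-\'etale cover $\tilde T \to T$ (obtained by pulling back $\mathscr{S}_\infty^{(p)}(G,X) \to \mathscr{S}_K(G,X)$) to $\tilde T$-points $\tilde x_1, \tilde x_2$ of $\mathscr{S}_\infty^{(p)}(G,X)$, and compare their images in $\mathscr{S}_{K_{\infty}}^{(p)}(\GSp, S^{\pm})(\tilde T)$. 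These images \emph{a priori} differ by a Hecke element $g \in K'^p$, but the $\Oo_B$-linearity built into both infinite-level lifts forces $g$ to commute with the $\Oo_B$-action on $\Lambda \otimes \A_f^p$, hence $g \in K'^p \cap G(\A_f^p) = K^p$. Absorbing $g$ into the residual $K^p$-orbit on the $G$-side identifies the two $\tilde T$-points of $\mathscr{S}_{K_{\infty}}^{(p)}(\GSp, S^{\pm})$, and Proposition \ref{injectivityprop-PEL} then yields $\tilde x_1 = \tilde x_2$ in $\mathscr{S}_\infty^{(p)}(G,X)(\tilde T)$. Standard \'etale descent along $\tilde T \to T$ gives $x_1 = x_2$.

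The main obstacle is justifying the compatibility step in the preceding paragraph: one must verify carefully that the Hecke element $g \in K'^p$ relating the two infinite-level lifts really lies in $K'^p \cap G(\A_f^p) = K^p$. This is where the choice $K^p := K'^p \cap G(\A_f^p)$ is essential; it reflects the standard identification of $G$ as the stabilizer in $\GSp$ of the $\Oo_B$-module structure on $\Lambda$, so that the $\Oo_B$-equivariance built into the definition of $\mathscr{S}_\infty^{(p)}(G,X)$ is precisely what promotes $g$ from $\GSp(\A_f^p)$ into $G(\A_f^p)$ and thus into $K^p$, allowing the infinite-level injectivity to descend.
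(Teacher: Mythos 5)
Your reduction to showing that $\Phi_{K,K'}$ is a proper monomorphism is fine, and the properness input from Lemma \ref{Kottwitz-properness-finite} is the same one the paper uses. The gap is in the key step where you claim that the Hecke element $g\in K'^p$ relating the two infinite-level lifts is ``forced to commute with the $\Oo_B$-action on $\Lambda\otimes\A_f^p$''. Write $\eta_2^p=\eta_1^p\circ g$. The $\Oo_B$-linearity of $\eta_1^p$ (for $\iota_1$) and of $\eta_2^p$ (for $\iota_2$) gives, for $a\in\Oo_B$,
\[ g\circ a \;=\; \big((\eta_1^p)^{-1}\circ\gamma(\iota_2(a))\circ\eta_1^p\big)\circ g, \]
i.e.\ $g$ intertwines the standard $\Oo_B$-action on $\Lambda\otimes\A_f^p$ with the $\iota_2$-action transported through $\eta_1^p$. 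Since the standard action corresponds to $\gamma(\iota_1(a))$ under $\eta_1^p$, this equals $a\circ g$ precisely when $\iota_1(a)=\iota_2(a)$ for all $a$ --- which is the statement you are trying to prove. So the assertion that $g$ centralizes $\Oo_B$, hence lies in $K'^p\cap G(\A_f^p)=K^p$, is not a consequence of the hypotheses; it is equivalent to the injectivity you want, and the argument is circular. Proposition \ref{injectivityprop-PEL} cannot be invoked until the two images in $\mathscr{S}_{K_\infty}^{(p)}(\GSp,S^{\pm})$ literally coincide, and the residual $K'^p$-ambiguity is exactly the difficulty at finite level.

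Note also that the statement you are implicitly proving is stronger than the corollary: you claim the specific choice $K^p=K'^p\cap G(\A_f^p)$ works for any sufficiently small $K'^p$, whereas the corollary only asserts existence of some pair $(K,K')$. The stronger claim is precisely the classically problematic one: if $\mcA$ admits two embeddings $\iota_1,\iota_2$ of $\Oo_B$ into $\End(\mcA)\otimes\Z_{(p)}$ that are conjugate by some $g\in K'^p$ not lying in $G(\A_f^p)$, and not conjugate under $\Aut(\mcA,\lambda)$ compatibly with the level structures, one obtains two distinct points of $\mathscr{S}_K(G,X)$ with the same Siegel image. The paper avoids this by a limiting argument: it considers the graph of the equivalence relation $\Phi_{K,K'}(x)=\Phi_{K,K'}(y)$ as a closed subscheme of $\mathscr{S}_K(G,X)^2$, observes that this graph decreases as $K'$ shrinks, uses local Noetherianity of $\mathscr{S}_K(G,X)$ to see that it stabilizes at some finite $K'$, and identifies the stable value with the diagonal via the infinite-level result (Proposition \ref{infinite-level-embedding}). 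Some such Noetherian stabilization step (or a genuinely different idea) is needed to descend from infinite level; the direct ``compatible levels'' argument does not close.
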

\begin{proof}
Consider the graph $\mathrm{Graph}_{K,K'}$ of the equivalence relation \[(x,y)\in\mathscr{S}_K(G,X)^2:\Phi_{K,K'}(x)=\Phi_{K,K'}(y),\]
which gives a closed subscheme of $\mathscr{S}_K(G,X)^2$. By definition, the graph $\mathrm{Graph}_{K,K'}$ shrinks as $K'\supset K$ shrinks. Since $\mathscr{S}_K(G,X)$ is locally Noetherian, $\mathrm{Graph}_{K,K'}$ stabilizes at some $K'\supset K$. By Proposition \ref{infinite-level-embedding}, $\Phi_{K,K'}$ is a closed embedding at some finite levels $K\subset G(\A_f)$ and $K'\subset\GSp(\A_f)$. 
\end{proof}

\begin{remark}
We could have deferred the proof of Lemma \ref{equiv-models} until now, using the stronger result of Corollary \ref{finite-level-embedding} instead of simply Lemma \ref{Kottwitz-properness-finite}. We chose the harder approach to emphasize that finiteness of the Hodge morphism suffices in deducing the equivalence of models. 
\end{remark}
Using Corollary \ref{finite-level-embedding}, the proof of Lemma \ref{equiv-models} now simplifies as follows: 
\begin{proof}
By Corollary \ref{finite-level-embedding}, we have a closed embedding $\mathcal{S}_K(G,X)\hookrightarrow\mathscr{S}_{K'}(\GSp,S^{\pm})$. 
Since $\mathcal{S}_K(G,X)$ is smooth by \cite{Kottwitz} (resp.~\cite{Rapoport}), in particular it is flat, and since $\mathscr{S}_K^-(G,X)_{\eta}\simeq \Sh_K(G,X)\simeq \mathcal{S}_K(G,X)_{\eta}$, we have $\mathcal{S}_K(G,X)\simeq\mathscr{S}_K^-(G,X)$. 
Since $\mathcal{S}_K(G,X)$ is normal, 
$\mathcal{S}_K(G,X)\simeq\mathscr{S}_K^-(G,X)\simeq \mathscr{S}_K(G,X)$, which is the desired equivalence of models.
\end{proof}

\bibliographystyle{amsalpha}
\bibliography{bibfile}

\end{document}